\newcommand\blfootnote[1]{%
  \begingroup
  \renewcommand\thefootnote{}\footnote{#1}%
  \addtocounter{footnote}{-1}%
  \endgroup
}
\newtheorem{theorem}{Theorem}[section]
\newtheorem{lemma}[theorem]{Lemma}
\newtheorem{proposition}[theorem]{Proposition}
\newtheorem*{claim*}{Claim}
\let\R\relax
\let\C\relax
\newcommand{\R}{\ensuremath{\mathbb{R}}}
\newcommand{\C}{\ensuremath{\mathbb{C}}}
\newcommand{\A}{\ensuremath{\mathfrak{A}}}
\newcommand{\cC}{\ensuremath{\mathcal{C}}}
\newcommand{\Q}{\ensuremath{\mathcal{Q}}}
\newcommand{\RN}[1]{%
  \textup{\uppercase\expandafter{\romannumeral#1}}%
}
\DeclareMathOperator{\tr}{tr}
\DeclareMathOperator{\spann}{span}
\title[Real Hypersurfaces with commuting $R_\xi$]{Real Hypersurfaces in $Q^m$ with commuting structure Jacobi operator}
\author[N.\ Heidari]{N. Heidari}
\author[S.\ M.\ B.\ Kashani]{S.M.B. Kashani}
\author[M.\ J.\ Vanaei]{M.J. Vanaei}
\address{Dept. of Math., University of Mohaghegh Ardabili, P.O. Box: 56199-11367, Ardabil, Iran}
\address{Dept. of Pure Math., Fculty of Math. Sciences, Tarbiat Modares University, P.O. Box: 14115-134, Tehran, Iran}
\address{Dept. of Pure Math., Fculty of Math. Sciences, Tarbiat Modares University, P.O. Box: 14115-134, Tehran, Iran}
\email{n.heidari@uma.ac.ir}
\email{kashanim@modares.ac.ir}
\email{javad.vanaei@modares.ac.ir}
\begin{document}

\blfootnote{The research is supported by Iran national Science foundation via grant no. 95012382, for the second and third authors.}

\subjclass[2010]{Primary 53C40; Secondary 53C55}
\keywords{Complex Quadric, structure Jacobi operator, Reeb curvature, complex conjugation, K\"{a}hler structure, Killing Ricci tensor, Killing shape operator}

\begin{abstract}
In this paper we study real hypersurfaces in the complex quadric space $Q^m$ whose structure Jacobi operator commutes with their structure tensor field. We show that the Reeb curvature $\alpha$ of such hypersurfaces is constant and if $\alpha$ is non-zero then the hypersurface is a tube around a totally geodesic submanifold $\C P^k \subset Q^m$, where $m=2k$. We also consider Reeb flat hypersurfaces, namely, when the Reeb curvature is zero. We show that the tube around $\C P^k \subset Q^m$ ($m=2k$), with radius $\frac{\pi}{4}$ is the only Reeb flat Hopf hypersurface with commuting Ricci tensor and also the only one with commuting shape operator. Finally, we prove that there does not exist any Reeb flat Hopf hypersurfaces with non-parallel Killing Ricci tensor.
\end{abstract}

\maketitle 

\section{Introduction}
One of the main research interests in the submanifold geometry is investigating interactions between submanifolds and some given geometric structures on the ambient space. 

Let $\bar{M}$ be a K\"{a}hler manifold with the almost complex structure $J$ and $M \subset \bar{M}$ be a real hypersurface. Then $J$ induces the orthogonal decomposition $TM = \cC \oplus \spann\{\xi\}$ where $\cC = \spann\{\xi\}^\perp$ is the maximal complex subbundle of $TM$ and the vector field $\xi$, known as the \textit{Reeb vector field}, is defined by $\xi = -JN$ for some unit normal vector field $N$ on $M$. Indeed, there is an almost contact metric structure $(\phi, \xi, \eta, g)$ induced on $M$ where $\phi$ denotes the tangential component of $J$ on $M$, $\eta$ is the dual one form of $\xi$ and $g$ indicates the induced metric tensor on $M$. A Real hypersurface whose shape operator preserves the above decomposition of $TM$ is referred to as a \textit{Hopf hypersurface}. 

In some known examples of homogeneous K\"{a}hler manifolds it has been shown that Hopf hypersurfaces are related to tubes around complex submanifolds. For example, according to Okumura \cite{Oku75} the Reeb flow of a real hypersurface $M$ in the complex projective space $\C P^m$ is isometric if and only if $M$ is an open part of a tube around a totally geodesic $\C P^k \subset \C P^m$ for some $k \in \{0, \ldots, m-1 \}$. In the case of symmetric spaces with rank greater than one, Berndt and Suh \cite{BerSuh02} proved that the Reeb flow on a real hypersurface in the complex $2$-plane Grassmannian $G_2(\C^{m+2}) = SU_{m+2}/SU_mSU_2$ is isometric if and only if the hypersurface is an open part of a tube around a $G_2(\C^{m+1}) \subset G_2(\C^{m+2})$. Another Hermitian symmetric space of rank $2$ is the complex quadric space $Q^m = SO_{m+2}/SO_mSO_2$. The complex quadric space $Q^m$ has some distinguished properties as it is the only compact parallel hypersurface in $\C P^{m+1}$ which is not totally geodesic (see \cite{NakTak76}). Moreover, it can also be described as a real Grassmann manifold of compact type with rank $2$ (\cite{KobNom96}). Accordingly, apart from the complex structure $J$, $Q^m$ admits a parallel complex line bundle $\A$ which contains an $S^1$-bundle of real structures, namely, complex conjugations on the tangent spaces of $Q^m$. For $m \geq 2$, $(Q^m, J, g)$ is a Hermitian symmetric space of rank $2$ with maximal sectional curvature $4$ (\cite{Rec95}).

In comparison to $\C P^m$ and $G_2(\C^{m+2})$, classification of real hypersurfaces with isometric Reeb flow in $Q^m$ is different. In fact, it is shown in \cite{BerSuh13} that such hypersurfaces are open part of tubes around a totally geodesic $\C P^k \in Q^m$, with $m = 2k$ (see Theorem~\ref{tubes-CPk-properties}).  

One of the notions that have commonly been used to study geometry of submanifolds is Jacobi operator (see for example \cite{ChoKi97}, \cite{JeoSuhYan09}, \cite{KiNagTak10}, \cite{LeePerSanSuh09}). Let $\nabla$ denote the Levi-Civita connection on $M$ with respect to the induced metric $g$. Then for a given unit vector field $X$ on $M$ the associated Jacobi operator is defined by $R_X = R(. , X)X$ where $R$ stands for the Riemannian curvature tensor field of $M$. The notion of Jacobi operator is related to Jacobi vector fields which are known to involve many significant geometric properties. Jacobi vector fields are solutions to the Jacobi differential equation $\nabla_{\gamma'}(\nabla_{\gamma'}Y) + R(Y,\gamma')\gamma' = 0$ along a geodesic $\gamma$ in $M$ with the velocity vector field $\gamma'$. 

For real hypersurfaces in K\"{a}hler manifolds the so-called \textit{structure Jacobi operator} $R_\xi$ is of special importance. Cho and Ki \cite{ChoKi97} classified real hypersurfaces in $\C P^m$ with commuting structure Jacobi operator i.e., $R_\xi \phi = \phi R_\xi$ under the condition that $S\xi$ is a principal curvature vector where $S$ is the shape operator of the hypersurface. They proved that such hypersurfaces are congruent to tubes around a totally geodesic $\C P^k \subset \C P^m$ with $1 \leq k \leq m-1$. In the complex $2$-plane Grassmannian $G_2(\C^{m+2})$, which has three structure tensors $\phi_i$, $1 \leq i \leq 3$ due to its quaternionic K\"{a}hler structure $\mathfrak{J}$, it has been proved that there is no real Hopf hypersurface whose structure Jacobi operator commutes with $\phi_i$, $1 \leq i \leq 3$ (see \cite{JeoSuhYan09}). In this paper we investigate real hypersurfaces with commuting structure tensor in the complex quadric space $Q^m$. We consider the general case without assuming that the hypersurface is Hopf. First, we prove that the normal vector field $N$ on $M$ is $\A$-isotropic everywhere (Proposition~\ref{N-isotropic}). Then we show that $M$ is a Hopf hypersurface (Proposition~\ref{M-Hopf}) and consequently its Reeb curvature function defined by $\alpha = g(S\xi , \xi)$ is constant (Proposition~\ref{alpha-constant}). We then consider two cases; first when $\alpha$ is non-zero, where our first classification result states 
\begin{theorem}\label{alpha-non-zero}
Let $M$ be a real hypersurface in $Q^m$ with commuting structure Jacobi operator and non-zero Reeb curvature. Then, $m$ is even, say $m=2k$, and $M$ is a tube around a $\C P^k \subset Q^m$.
\end{theorem}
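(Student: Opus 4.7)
The plan is to reduce Theorem~\ref{alpha-non-zero} to the Berndt--Suh classification of real hypersurfaces in $Q^m$ with isometric Reeb flow, quoted in the excerpt as Theorem~\ref{tubes-CPk-properties}. Concretely, I would show that under the present hypotheses the shape operator and the structure tensor field commute, $S\phi=\phi S$; this is equivalent to the Reeb flow being isometric, and Theorem~\ref{tubes-CPk-properties} then immediately yields that $M$ is an open part of a tube around a totally geodesic $\C P^k\subset Q^m$ with $m=2k$.

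The key step is deriving $S\phi=\phi S$ from the hypothesis $R_\xi\phi=\phi R_\xi$. I would first invoke the three facts already established: $N$ is $\A$-isotropic (Proposition~\ref{N-isotropic}), $M$ is Hopf with $S\xi=\alpha\xi$ (Proposition~\ref{M-Hopf}), and $\alpha$ is constant (Proposition~\ref{alpha-constant}). The Gauss equation together with the explicit curvature formula of $Q^m$ gives, for $X\in\cC$,
\[
R_\xi X \;=\; \bar R(X,\xi)\xi + \alpha\, SX.
\]
Picking a local real structure $A\in\A$ and setting $U:=AN$, the $\A$-isotropy forces $U\in\cC$, and the anti-commutation $AJ=-JA$ yields $A\xi=\phi U$ and $JA\xi=-U$; in particular $g(A\xi,\xi)=g(JA\xi,\xi)=0$. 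Substituting into the curvature formula of $Q^m$ collapses the cross terms and produces
\[
\bar R(X,\xi)\xi \;=\; X - g(X,U)\,U - g(X,\phi U)\,\phi U, \qquad X\in\cC.
\]
Imposing $R_\xi\phi X=\phi R_\xi X$ and using the elementary identities $g(\phi X,U)=-g(X,\phi U)$, $g(\phi X,\phi U)=g(X,U)$, and $\phi^2U=-U$, the ambient contributions on the two sides match term-by-term, so the whole identity reduces to $\alpha\,S\phi X=\alpha\,\phi SX$. Since $\alpha\ne 0$, this gives $S\phi=\phi S$ on $\cC$, and the relation extends trivially to $\spann\{\xi\}$ because $\phi\xi=0$ and $S\xi=\alpha\xi$.

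The main technical obstacle, though essentially computational, is the careful simplification of $\bar R(X,\xi)\xi$: the curvature tensor of $Q^m$ a priori couples $X$ to $J\xi$, $A\xi$, $JA\xi$ and to $AX$, $JAX$, and only the joint use of $\A$-isotropy and $AJ=-JA$ makes all these couplings collapse into the clean two-term expression above. It is exactly at this point that the hypothesis $\alpha\ne 0$ becomes indispensable: the surviving shape-operator term $\alpha\,SX$ is what turns the commutation $R_\xi\phi=\phi R_\xi$ into information about $S$. If $\alpha=0$ the method breaks down entirely and a different analysis (carried out later in the paper for the Reeb-flat case) is needed. Once $S\phi=\phi S$ is established, Theorem~\ref{tubes-CPk-properties} delivers the full conclusion, including the parity constraint $m=2k$.
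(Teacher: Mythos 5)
Your proposal is correct and follows essentially the same route as the paper: after invoking Propositions~\ref{N-isotropic}, \ref{M-Hopf} and \ref{alpha-constant}, you reduce the commutation hypothesis to the identity $\phi R_\xi - R_\xi\phi = \alpha(\phi S - S\phi)$ (the paper's equation~\eqref{phiR-Rphi.2}), so that $\alpha\neq 0$ forces $S\phi=\phi S$, and then the Berndt--Suh classification (Theorem~\ref{tubes-CPk-properties}) finishes the argument. The only difference is cosmetic: you re-derive the key identity directly from the ambient curvature formula of $Q^m$ rather than specializing the already-computed expressions~\eqref{phiR} and~\eqref{Rphi}, and your simplification of $\bar R(X,\xi)\xi$ under $\A$-isotropy checks out.
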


Then, we suppose that the hypersurface $M$ is Reeb flat, namely $\alpha = 0$. Note that by equation \eqref{phiR-Rphi.2} a Reeb flat Hopf hypersurface has commuting structure Jacobi operator and thus we can drop the condition $R_\xi \phi = \phi R_\xi$ from our assumptions in this case. In most studies on $Q^m$ concerning the Reeb curvature function, the case of $\alpha = 0$ has been excluded. In this paper we give some characterizations for Reeb flat hypersurfaces in terms of their shape operator and Ricci tensor. 

According to \cite{BerSuh13} for real Hopf hypersurfaces in $Q^m$ the shape operator is commuting ($S\phi = \phi S$) if and only if the Reeb flow of $M$ is isomteric or, equivalently, if $M$ is an open part of a tube around a $\C P^k \subset Q^m$, with $m = 2k$. The tubes around $\C P^k \subset Q^m$ are of radius $0 < r < \pi/2$. In Proposition~\ref{lambda} we show that if a non-zero $\lambda$ is an eigenvalue of the shape operator of a Reeb flat real Hopf hypersurface then so is $\frac{1}{\lambda}$. Using that we prove
\begin{theorem}\label{commuting shape operator}
Let $M$ be a Reeb flat real Hopf hypersurface in $Q^m$. Then, the shape operator of $M$ is commuting, i.e. $S\phi = \phi S$, if and only if $m$ is even, say $m = 2k$, and $M$ is a tube of radius $\frac{\pi}{4}$ around a $\C P^k \subset Q^m$.
\end{theorem}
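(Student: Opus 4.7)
The plan is to exploit the Berndt--Suh characterization recalled just before the statement: for a real Hopf hypersurface in $Q^m$, the commuting condition $S\phi = \phi S$ is equivalent to $m$ being even, $m = 2k$, and $M$ being an open part of a tube of some radius $r \in (0, \pi/2)$ around a totally geodesic $\C P^k \subset Q^m$. Once this is in hand, the proof reduces to pinning down the single radius $r$ for which the resulting tube is Reeb flat.

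For the direction ($\Rightarrow$), I assume $M$ is a Reeb flat Hopf hypersurface with $S\phi = \phi S$. The Berndt--Suh theorem immediately yields $m = 2k$ and identifies $M$ as an open part of a tube of some radius $r \in (0, \pi/2)$ around $\C P^k \subset Q^m$. The principal curvatures and their multiplicities on such a tube are listed explicitly in Theorem~\ref{tubes-CPk-properties} as trigonometric functions of $r$; in particular, the Reeb principal curvature takes an explicit cotangent-type form in $r$. Imposing $\alpha = 0$ on $(0, \pi/2)$ is then a single elementary trigonometric equation whose unique solution is $r = \pi/4$. Alternatively, Proposition~\ref{lambda} can be used algebraically: the non-Reeb principal curvatures of the Berndt--Suh tube form a reciprocal-related pair parametrised by $r$, and the proposition forces each non-zero eigenvalue $\lambda$ to coincide with $1/\lambda$, which once again pins down $r = \pi/4$.

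For the converse ($\Leftarrow$), any tube of radius $\pi/4$ around $\C P^k \subset Q^m$ with $m = 2k$ lies in the Berndt--Suh family, so $S\phi = \phi S$ holds automatically, and substituting $r = \pi/4$ into the explicit expression for $\alpha$ from Theorem~\ref{tubes-CPk-properties} yields $\alpha = 0$, i.e.\ $M$ is Reeb flat. The main obstacle here is not conceptual but bookkeeping: one has to correctly locate the Reeb principal curvature within the list provided by Theorem~\ref{tubes-CPk-properties} and confirm the uniqueness of the root in $(0, \pi/2)$. The heavy classification content is already absorbed by the Berndt--Suh theorem; the new information contributed by this theorem is the selection of the one radius in the family that simultaneously kills the Reeb curvature.
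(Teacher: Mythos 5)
Your proposal is correct, and it reverses the order of the two main steps relative to the paper's argument. The paper first combines Proposition~\ref{lambda} with the hypothesis $S\phi=\phi S$: for $X\in\Q(\lambda)$ one has $S\phi X=\tfrac{1}{\lambda}\phi X$ while $\phi SX=\lambda\phi X$, so every non-zero principal curvature satisfies $\lambda=1/\lambda$, i.e.\ $\lambda=\pm1$; this produces the full spectrum $\{0,\pm1\}$ with eigenspaces $\spann\{\xi,A\xi,AN\}$, $\Q(1)$, $\Q(-1)$ \emph{before} any classification is invoked, and only then does it appeal to the Berndt--Suh equivalence to identify $M$ as the tube with $\cot r=1$, hence $r=\pi/4$. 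You instead invoke the Berndt--Suh classification first to get a tube of some radius $r\in(0,\pi/2)$ and then solve the single equation $\alpha=2\cot(2r)=0$ (the entry ``$t\cot(2r)$'' in the table of Theorem~\ref{tubes-CPk-properties} is a typo for $2\cot(2r)$, consistent with $-\tan r+\cot r=2\cot(2r)$), whose unique root in $(0,\pi/2)$ is $r=\pi/4$; the converse direction is the same in both treatments. The two routes rest on exactly the same classification input, so this is a legitimate variant rather than a gap: yours is shorter and uses Reeb flatness only at the last step, while the paper's extracts the intrinsic principal-curvature data $\{0,\pm1\}$ en route, which it reuses in later arguments. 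One small caution on your alternative argument via Proposition~\ref{lambda}: $-\tan r$ and $\cot r$ are \emph{negative} reciprocals, so the proposition by itself only says that $1/(-\tan r)=-\cot r$ must occur among the eigenvalues $\{0,-\tan r,\cot r\}$, which again forces $\tan r=\cot r$ and $r=\pi/4$; the stronger statement that each non-zero eigenvalue equals its own reciprocal requires the commutation hypothesis, as in the paper's proof.
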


It follows from Proposition~\ref{not-contact} that the family of Reeb flat real Hopf hypersurfaces in $Q^m$ in some sense lies between the family of contact hypersurfaces in one side, and some of the known families of almost contact hypersurfaces such as co-symplectic and (nearly) Kenmutso hypersurfaces on the other side. In fact, if a real hypersurface $M$ is contact then $d\eta$ does not vanish identically at any point. We recall that $M$ is called a contact hypersurface if there exists an everywhere non-zero smooth function $\rho$ on $M$ such that $d\eta = 2\rho\omega$ where $\omega$ is the fundamental $2$-form defined by $\omega(X,Y) = g(\phi X, Y)$. On the contrary, a necessary condition for a real hypersurface to be co-symplectic or (nearly) Kenmutso is that the distribution $\cC$ be integrable or equivalently $d\eta \equiv 0$. In Proposition~\ref{not-contact} we show that for a Reeb flat real Hopf hypersurface $M \subset Q^m$, $d\eta$ does not vanish identically, at any point, but $M$ is not contact.

Our next result demonstrates another unique feature of the tube of radius $\pi/4$ around $\C P^k$ as a Reeb flat hypersurface:
\begin{theorem}\label{commuting ric}
Let $M$ be a Reeb flat real Hopf hypersurface in $Q^m$. Then $Ric \phi = \phi Ric$ if and only if $m$ is even, say $m = 2k$, and $M$ is a tube of radius $\frac{\pi}{4}$ around a $\C P^k \subset Q^m$.
\end{theorem}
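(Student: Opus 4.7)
\emph{Proof plan.} The sufficiency direction is a direct verification. By Theorem~\ref{tubes-CPk-properties}, the tube around $\C P^k \subset Q^{2k}$ of radius $r$ has Reeb curvature $\alpha = -2\tan(2r)$, which vanishes at $r=\pi/4$, and the remaining principal curvatures on $\cC$ are $\cot r = 1$ and $-\tan r = -1$ on two isomorphic subspaces interchanged by $\phi$; moreover, the unit normal to such a tube is $\A$-isotropic. Plugging these data into the Gauss-equation expression of $Ric$ and using the explicit curvature tensor of $Q^m$, one checks on each eigenspace of $S$ that $Ric$ preserves it and acts by a scalar that is invariant under $\phi$, yielding $Ric\,\phi = \phi\,Ric$.

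For the necessity direction I first collect structural information. By equation~\eqref{phiR-Rphi.2}, a Reeb flat Hopf hypersurface automatically satisfies $R_\xi\phi = \phi R_\xi$, so Propositions~\ref{N-isotropic}, \ref{M-Hopf} and \ref{alpha-constant} apply and the unit normal $N$ is $\A$-isotropic at every point. Hence at each point there exists a real structure $A\in\A$ such that $U := A\xi$ and $V := AN$ are unit tangent vectors in $\cC$ with $AU=\xi$, $AV=N$, and $\spann\{U,V\}$ is a $\phi$-invariant two-plane. This produces an orthogonal, $\phi$-invariant decomposition
\[
TM = \R\xi \oplus \spann\{U,V\} \oplus \cC',
\qquad \cC' := (\R\xi \oplus \spann\{U,V\})^\perp,
\]
and by Proposition~\ref{lambda} the nonzero principal curvatures of $S|_{\cC}$ come in pairs $\lambda \leftrightarrow 1/\lambda$, with $\phi$ sending the $\lambda$-eigenspace into the $(1/\lambda)$-eigenspace.

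The main computation is the expansion of the Ricci operator through the Gauss equation, using the standard curvature formula of $Q^m$. For a principal vector $X \in \cC'$ with $SX = \lambda X$, the $A$-terms in the ambient curvature contribute only to components in $\spann\{U,V\}$ and therefore vanish when evaluated on $\cC'$; the condition $Ric\,\phi X = \phi\,Ric\,X$ then collapses---after using $S\phi X = (1/\lambda)\phi X$ from Proposition~\ref{lambda} together with the fact that $\tr S$ is constant on $M$ (a consequence of Propositions~\ref{M-Hopf}--\ref{alpha-constant})---to a polynomial identity in $\lambda$ that forces $\lambda^2 = 1$ on $\cC'$. A separate analysis on $\spann\{U,V\}$, where the $A$-terms of the curvature interact non-trivially with the K\"ahler terms, fixes the principal curvatures of $S$ on this plane compatibly to the same values $\pm 1$. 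Once the full principal-curvature spectrum of $S$ is identified and the eigenspaces are paired by $\phi$, comparison with Theorem~\ref{tubes-CPk-properties} shows that $M$ is an open part of a tube of radius $\pi/4$ around a totally geodesic $\C P^k \subset Q^{2k}$; in particular $m = 2k$.

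I expect the main obstacle to be the two-plane $\spann\{U,V\}$: there the $A$-terms in the ambient curvature mix with the K\"ahler terms, and the polynomial collapse available on $\cC'$ no longer applies. Handling this block cleanly will likely require the algebraic identities $AU=\xi$, $AV=N$ together with the $\phi$-invariance of $\spann\{U,V\}$ and the Codazzi equation for a Hopf hypersurface in $Q^m$, in order to pin down the two remaining eigenvalues of $S$ explicitly.
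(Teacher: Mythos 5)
Your route is essentially the paper's: reduce to the $\A$-isotropic Hopf case, use Proposition~\ref{lambda} to pair principal spaces $\Q(\lambda)\leftrightarrow\Q(1/\lambda)$ under $\phi$, compute $Ric$ on each principal space from the Gauss equation, and turn $Ric\,\phi=\phi\,Ric$ into a polynomial condition on $\lambda$. However, the decisive step is asserted rather than proved, and as stated it is false. For $X\in\Q(\lambda)$ one finds $Ric(X)=(2m-1+\lambda\tr S-\lambda^2)X$ and $Ric(\phi X)=(2m-1+\tfrac{1}{\lambda}\tr S-\tfrac{1}{\lambda^2})\phi X$, so commutation gives $\lambda^4-(\tr S)\lambda^3+(\tr S)\lambda-1=0$, i.e.\ $(\lambda^2-1)\bigl(\lambda^2-(\tr S)\lambda+1\bigr)=0$. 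This does \emph{not} force $\lambda^2=1$: whenever $|\tr S|\ge 2$ there is a second admissible real pair $\mu,1/\mu$ with $\mu+1/\mu=\tr S$. The paper eliminates it by the same multiplicity argument as in Proposition~\ref{parallel ric}: since $\dim\Q(\mu)=\dim\Q(1/\mu)$ and the contributions of $\Q(1)$ and $\Q(-1)$ to the trace cancel, summing eigenvalues gives $\tr S=q\bigl(\mu+\tfrac{1}{\mu}\bigr)=q\,\tr S$, forcing $\tr S=0$ and hence $\mu^2=-1$, which is impossible. Your appeal to ``$\tr S$ constant on $M$'' is not the missing ingredient --- constancy plays no role here; what is needed is this pointwise trace identity. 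Without it the necessity direction is incomplete.

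Two smaller points. The plane $\spann\{A\xi,AN\}$ that you single out as the main obstacle is in fact the easy part: by Lemma~5.1 of \cite{PerJeoKoSuh18} one has $SA\xi=SAN=0$, the Ricci operator acts on both $A\xi$ and $AN$ by the same scalar $2m$, and $\phi$ interchanges them (up to sign), so $Ric\,\phi=\phi\,Ric$ holds there automatically and no condition on $S$ comes from that block; the entire content of the theorem lives on $\Q=\bigoplus_{\lambda\neq 0}\Q(\lambda)$. Finally, in the sufficiency direction the Reeb curvature of the tube of radius $r$ is $2\cot(2r)$, not $-2\tan(2r)$; only the former vanishes at $r=\pi/4$ (the latter blows up there), so that formula should be corrected even though the verification itself goes through.
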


In Proposition~\ref{parallel ric} we determine eigenvalues of shape operators of Reeb flat Hopf hypersurfaces with parallel Ricci tensor in $Q^m$. It is a correction to Theorem~1.2 of \cite{Suh15}. In that theorem the values $\pm\sqrt{3}$ are missing as principal curvatures of Reeb flat hypersurfaces. Besides correcting that mistake, our proof of Proposition~\ref{parallel ric} which uses Proposition~\ref{lambda} is different and shorter than that of Theorem~1.2 in \cite{Suh15}.  In the case of non-parallel Ricci tensor we prove the following non-existence theorem:
\begin{theorem}\label{killing ric}
There does not exist any Reeb flat real Hopf hypersurface in $Q^m$ with non-parallel Killing Ricci tensor.
\end{theorem}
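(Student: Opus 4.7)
The strategy is to assume, for contradiction, that a Reeb flat Hopf hypersurface $M \subset Q^m$ carries a non-parallel Killing Ricci tensor, and then derive enough differential-geometric constraints to force $\nabla Ric = 0$. First I would write out the explicit form of $Ric$ on such an $M$ by combining the Gauss equation with the curvature of $Q^m$ (which has the standard K\"{a}hler term plus a correction involving the real structure $A \in \A$); since $M$ is Hopf with $\alpha = 0$ and, by Proposition \ref{N-isotropic}, the unit normal $N$ is $\A$-isotropic, the result should take a simple form on $\cC$ and satisfy $Ric\,\xi = $ some concrete linear combination of $\xi$ and a vector built from $A$ and $S$.

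Next I would exploit the Killing condition $(\nabla_X Ric)Y + (\nabla_Y Ric)X = 0$ by specialising to $X = \xi$, obtaining
\[
(\nabla_\xi Ric)Y = -(\nabla_Y Ric)\xi \quad \text{for all } Y.
\]
Using that $M$ is Hopf with $\alpha = 0$, so $\nabla_X \xi = \phi S X$ and $S\xi = 0$, the right-hand side can be computed in terms of $\nabla S$ via the Codazzi equation in $Q^m$ and in terms of $\nabla A$ via the $S^1$-connection form $q$ of the bundle $\A$. Plugging in an eigenvector $Y$ of $S$ with eigenvalue $\lambda$ and pairing with another eigenvector of eigenvalue $\mu$ then produces algebraic identities relating $\lambda,\mu,1/\lambda,1/\mu$ (the last two available via Proposition \ref{lambda}) and the $A$-components of these eigenvectors.

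The decisive step will be taking $X = Y$ a principal eigenvector: the Killing symmetry $(\nabla_X Ric)X = 0$ eliminates the symmetric part of the derivative and forces the skew-part, which encodes exactly the non-parallelism, to vanish on each eigenspace. Combined with the reciprocity of eigenvalues and the constraint that $N$ is $\A$-isotropic, this should rule out any eigenvalue configuration consistent with a genuinely non-parallel $Ric$. Equivalently, every configuration satisfying the Killing equation collapses back into the list classified in Proposition \ref{parallel ric}, where $Ric$ is already parallel.

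The main obstacle will be the $A$-dependent terms in $\nabla Ric$. Because $\nabla_X A = q(X)JA$, the Killing identity couples the shape operator data with the connection form $q$ along $\cC$, and it is not obvious a priori that these contributions cancel under symmetrisation in $X,Y$. Carefully tracking these $A$-contributions — likely by decomposing every principal eigenvector into $A$-fixed and $A$-anti-fixed parts and using the $\A$-isotropy of $N$ — is where the real work lies; once they are under control, the argument should close by showing $\nabla Ric \equiv 0$, contradicting the hypothesis.
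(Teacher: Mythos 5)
Your opening move is the right one: the paper's Killing hypothesis is precisely the identity $(\nabla_\xi Ric)X = -(\nabla_X Ric)\xi$. Note, however, that the paper defines ``Killing'' only with respect to $\xi$, so the full symmetric condition $(\nabla_X Ric)Y + (\nabla_Y Ric)X = 0$ from which you start is strictly stronger than what you are entitled to assume. This already undercuts your announced ``decisive step'': the relation $(\nabla_X Ric)X = 0$ for a principal eigenvector $X \in \Q(\lambda)$ is simply not available under the stated hypothesis, and in any case it is not the step that closes the argument.

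The mechanism that actually works is one your outline never reaches. Since $N$ is $\A$-isotropic one has $SA\xi = SAN = 0$ and $S\phi SX = \phi X$ on $\Q$ (equation \eqref{eq9}); feeding this into the Codazzi equation gives $(\nabla_\xi S)X = 0$ on $\Q$, and a short computation with $\bar{\nabla}A = JA\otimes q$ shows that $\nabla_\xi$ preserves each eigenspace $\Q(\lambda)$. This is the only place the connection form $q$ enters, and its contributions die immediately via $g(X, q(\xi)AN) = g(X, q(\xi)A\xi) = 0$ for $X \in \Q$ --- so the $A$-terms you single out as the main obstacle are in fact harmless. The payoff is that $(\nabla_\xi Ric)X = \xi(f)\,X$ with $f = 2m-1+\lambda\tr S - \lambda^2$, i.e.\ it is \emph{parallel to $X$}, whereas $(\nabla_X Ric)\xi = \lambda(\lambda^2 - \lambda\tr S - 3)\,\phi X$ is \emph{parallel to $\phi X$}. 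Since $X \perp \phi X$, the Killing identity forces both coefficients to vanish separately; the resulting equation $\lambda^2 - \lambda\tr S - 3 = 0$, combined with the $\lambda \mapsto \tfrac{1}{\lambda}$ reciprocity of Proposition~\ref{lambda} and the trace bookkeeping used in Proposition~\ref{parallel ric}, gives $\tr S = 0$ and constant principal curvatures $\pm\sqrt{3}$, $\pm\tfrac{\sqrt{3}}{3}$, whence $(\nabla_\xi Ric)X = (\nabla_X Ric)\xi = 0$ and the Ricci tensor is parallel --- the desired contradiction. Your proposal gestures at ``algebraic identities relating $\lambda,\mu,1/\lambda,1/\mu$,'' but without the two lemmas $(\nabla_\xi S)|_{\Q} = 0$ and $\nabla_\xi\Q(\lambda) \subset \Q(\lambda)$, and without observing that the two sides of the Killing identity live in orthogonal directions, there is no way to extract them; as written the outline does not amount to a proof.
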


Our last result, Proposition~\ref{S-is-Reeb parallel}, states that principal curvatures of a Reeb flat hypersurface is constant along the Reeb vector field and its shape operator is Reeb parallel. 
 
The paper is organized as follows. In Section~\ref{Sec:preliminaries} we provide preliminaries on $Q^m$ and real hypersurfaces in it. The proofs of the results are given in sections~\ref{Sec:commuting-jacobi} and  \ref{Sec:Reeb-flat}. In Section~\ref{Sec:commuting-jacobi} we get some general equations and results for real hypersurfaces in $Q^m$ with commuting structure Jacobi operator, although the section concludes with a classification result for the case of non-zero Reeb curvature. In Section~\ref{Sec:Reeb-flat} we focus on Reeb flat real Hopf hypersurfaces in $Q^m$.

\section{Preliminaries}\label{Sec:preliminaries}
In this section we provide preliminaries including what is needed on $Q^m$ and its real hypersurfaces. One can find the details in \cite{BerSuh13, BerSuh15, Smy67, Rec95}.
\subsection{The Complex Quadric}

We denote by $\C P^{m+1}$ the $(m+1)$-dimensional complex projective space equipped with the Fubbini-Study metric. Then, with  the homogeneous coordinates $z_1, \ldots, z_{m+1}$ on $\C P^{m+1}$, the complex hypersurface defined by the equation $z_1^2 + \cdots + z_{m+1}^2=0$ endowed with the induced K\"{a}hler structure $(g,J)$ from $\C P^{m+1}$ is a compact Hermitian symmetric space of rank two known as the complex quadric $Q^m$. The two-dimensional complex quadric $Q^2$ is isometric to $S^2 \times S^2$. In this paper we assume $m \geq 3$.

For a non-zero vector $z \in \C^{m+2}$ let $[z]$ denote the complex line spanned by $z$, that is $[z]=\C z$. Then, through the canonical identification the tangent space to $\C P^{m+1}$ at $[z]$ is given by $T_{[z]}\C P^{m+1} = \C^{m+2} \ominus [z]$ where $\C^{m+2} \ominus [z]$ denotes the orthogonal complement of the subspace $[z]$ in $\C^{m+2}$. It then follows that for $[z] \in Q^m$ we can identify $T_{[z]}Q^m$ with $\C^{m+2} \ominus ([z] \oplus [\bar{z}])= T_{[z]}\C P^{m+1} \ominus [\bar{z}]$. In particular, $\bar{z} \in \nu_{[z]}Q^m$ is a unit normal vector of $Q^m$ in $\C P^{m+1}$ at $[z] \in Q^m$.

For any unit vector $\zeta \in \nu_{[z]}Q^m$ the shape operator $A_\zeta$ of $Q^m$ is a conjugation on $T_{[z]}Q^m$, that is, $A_\zeta$ is an anti-linear involution. The operator $A_\zeta$ anti-commutes with the complex structure $J$, $A_\zeta J = -J A_\zeta$, and we have the decomposition 
\[
T_{[z]}Q^m = V(A_\zeta) \oplus JV(A_\zeta) \ ,
\]
where $V(A_\zeta)$ and $JV(A_\zeta)$ are, respectively, the $(+1)$ and $(-1)$-eigenspace of $A_\zeta$. For $\zeta = \bar{z}$ one gets $V(A_{\bar{z}})=\R^{m+2} \cap T_{[z]}Q^m$ and $JV(A_{\bar{z}})=i \R^{m+2} \cap T_{[z]}Q^m$.

The set $\A = \{A_\zeta : \zeta \in \nu Q^m \}$ of shape operators of $Q^m$ forms a parallel rank two subbundle of the endomorphism bundle $End(TQ^m)$, that is, for a certain $1$-form $q$ defined on $T_{[z]}Q^m$, $[z] \in Q^m$ we have $\bar{\nabla}A = JA \otimes q$, where $\bar{\nabla}$ denotes the induced connection from $\C P^{m+1}$ on $Q^m$. The subset $\A^0 = \{ \lambda A : \lambda \in S^1 \subset \C \}$ of $\A$ is a $S^1$-subbundle of conjugations and we have $V(\lambda A) = \lambda V(A)$.

From the Gauss equation for the complex hypersurface $Q^m \subset \C P^{m+1}$ one gets the following formula for the Riemannian curvature tensor $\bar{R}$ of $Q^m$ in terms of its Riemannian metric $g$, complex structure $J$ and a generic conjugation $A$ in $\A$:
\begin{align}\label{R-Q}
\bar{R}(X,Y)Z = & (X \wedge Y)Z + (JX \wedge JY)Z - 2g(JX , Y)JZ \\ \nonumber
 & + (AX \wedge AY)Z + (JAX \wedge JAY)Z 
\end{align}
for any $X$ and $Y$ tangent to $Q^m$, where 
\[
(U \wedge V)Z = g(V,Z)U - g(U,Z)V .
\] 

A non-zero vector $W \in T_{[z]}Q^m$ is said to be singular if it is tangent to more than one maximal flat in $Q^m$. For the complex quadric $Q^m$ a non-zero vector $W$ is singular if and only if either $W \in V(A)$ for some $A \in \A_{[z]}$ or $g(W,AW)=g(AW,JW)=0$ for some (and hence, any) $A \in \A_{[z]}$. In the first case $W$ is called a $\A$-principal singular vector and in the second case it is known as a $\A$-isotropic singular vector. 

For each unit tangent vector $W \in T_{[z]}Q^m$ there exist a conjugation $A_{[z]}$ and orthonormal vectors $X,Y \in V(A_{[z]})$ such that
\[
W = \cos(t)X + \sin(t)JY \ ,
\]
for some unique $t \in [0, \frac{\pi}{4}]$. The number $t$ is called the $\A$-angle or characteristic angle of $W$ and the vector $W$ is $\A$-principal, respectively $\A$-isotropic, if and only if its $\A$-angle is $0$, respectively $\frac{\pi}{4}$.

\subsection{Real Hypersurfaces in the complex quadric}
Now let $M$ be a real hypersurface of $Q^m$. Fix a (local) unit normal vector field $N$ on $M$. Then the unit vector field $\xi = - JN$ is tangent to $M$ and is known as the Reeb vector field of $M$. We denote its dual $1$-form with $\eta$ given by $\eta(X) = g(X,\xi)$ for any $X \in TM$. The tensor field $\phi $ on $M$ defined by $JX = \phi X + \eta(X)N$, with $X \in TM$, determines the tangential component of $J$ and is called the structure tensor field of $M$. The hypersurface $M$ together with the structure $(\phi , \xi, \eta, g)$ is an almost contact hypersurface of $Q^m$. The distribution $\cC = \ker \eta$ is the maximal complex subbundle of $TM$ and the tangent bundle $TM$ splits orthogonally into $TM = \cC \oplus \R \xi$. The tensor $\phi $ coincide with the complex structure $J$ restricted to $\cC$ and $\phi \xi = 0$. Given any $X$, $Y$ tangent to $M$, one has
\begin{equation}\label{eq0}
(\nabla_X\phi)Y = \eta(Y) SX - g(SX , Y)\xi , \quad \nabla_X\xi = \phi SX
\end{equation}
where $\nabla$ denotes the induced connection and $S$ stands for the shape operator of $M$ in $Q^m$.
   
Let $B$ and $\theta$ denote the tangent components of $A$ and $JA$, respectively, that is $AX = BX + g(AX , N)$ and 
\[
JAX = \theta X + g(JAX , N)N = \theta X + g(X , B\xi)N
\]
Then using \eqref{R-Q} and the Gauss equation for the hypersurface $M$ in $Q^m$ one gets the following formula for the curvature tensor of $M$:
\begin{equation}\label{R-M}
\begin{array}{rcl}
R(X,Y) & = & X \wedge Y + \phi X \wedge \phi Y - 2g(\phi X , Y)J + BX \wedge BY \\ && + \theta X \wedge \theta Y + SX \wedge SY
\end{array}
\end{equation}   

At each point $[z] \in M$ let $\Q_{[z]}$ denote the maximal $\A$-invariant subspace of $T_{[z]}M$ given by 
\[
\Q_{[z]} = \{ X \in TM : AX \in T_{[z]}M \ \ \text{for all} \ \ A \in \A_{[z]} \}
\]

The following lemma describes $\Q_{[z]}$.
\begin{lemma}[\cite{BerSuh13, Suh14}]\label{N-Type-characterization}
Let $M$ be a real hypersurface in $Q^m$ with unit normal vector field $N$. Then 
\begin{itemize}
\item[(i)]
The followings are equivalent:
\begin{itemize}
\item[(1)]
The normal vector $N_{[z]}$ is $\A$-principal;

\item[(2)]
$\Q_{[z]} = \cC_{[z]}$;

\item[(3)]
for some $A \in \A_{[z]}$, $AN_{[z]} \in \C N_{[z]}$;

\item[(4)]
for some $A \in \A_{[z]}$,  $AN = N$.
\end{itemize}

\item[(ii)]
If $N_{[z]}$ is not $\A$-principal, then there exist a conjugation $A \in \A_{[z]}$ and orthonormal vectors $X, Y \in V(A)$ such that $N_{[z]} = \cos(t)X + \sin(t)JY$ for some $t \in (0, \frac{\pi}{4}]$ and we have $\Q_{[z]} = \cC_{[z]} \ominus \C(JX + Y)$.

\item[(iii)]
If $N_{[z]}$ is $\A$-isotropic, then there is a conjugation $A \in \A_{[z]}$ such that $N_{[z]}$, $\xi_{[z]}$, $AN_{[z]}$ and $A\xi_{[z]}$ are pairwise orthogonal and $\Q_{[z]} = \cC_{[z]} \ominus (\R AN_{[z]} \oplus \R A\xi_{[z]})$.
\end{itemize}
\end{lemma}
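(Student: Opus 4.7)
My plan is to unwind the statements using the $\A$-angle decomposition recalled in the preliminaries. Fix a conjugation $A$ in the $S^1$-subbundle $\A^0_{[z]}$ and orthonormal $X, Y \in V(A)$ with $N_{[z]} = \cos(t) X + \sin(t) JY$ for the unique $t \in [0, \pi/4]$. Since $A$ is a real-linear $g$-symmetric involution anti-commuting with $J$, with $AX = X$ and $AY = Y$, one immediately computes
\[
AN = \cos(t) X - \sin(t) JY, \quad JAN = \cos(t) JX + \sin(t) Y, \quad \xi = -\cos(t) JX + \sin(t) Y,
\]
and, using that both $A$ and $JA$ are $g$-symmetric, the conditions $AZ, (JA)Z \in T_{[z]}M$ translate to $g(Z, AN) = 0$ and $g(Z, JAN) = 0$.

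For part (i), the equivalence $(1) \Leftrightarrow (4)$ is just $N \in V(A) \Leftrightarrow AN = N$, and $(4) \Rightarrow (3)$ is trivial. For $(3) \Rightarrow (4)$: if $AN = \mu N$ with $\mu \in \C$, then $A$ being an isometry forces $|\mu| = 1$, and $\bar\mu A \in \A^0$ is still a conjugation satisfying $(\bar\mu A) N = N$. For $(1) \Rightarrow (2)$: when $AN = N$, one has $JAN = JN = -\xi$, hence both defining conditions hold automatically on $\cC$, and a direct check using $A\xi = -\xi$ confirms $\cC$ is itself $\A$-invariant, giving $\Q = \cC$ by maximality. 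For $(2) \Rightarrow (3)$: $\A$-invariance of $\cC$ forces $g(Z, AN) = g(AZ, N) = 0$ for every $Z \in \cC$, so $AN \in (\cC)^\perp = \R N + \R \xi = \C N$, which is (3); then $(3) \Rightarrow (1)$ via the chain above.

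For parts (ii) and (iii), the first step is the inclusion $\Q \subseteq \cC$: since $\Q$ is closed under the $\A$-action, any $Z \in \Q$ satisfies $(JA) \circ A(Z) = JZ \in T_{[z]}M$, i.e., $Z \in \cC$. Projecting $AN$ and $JAN$ into $\cC$ using $g(AN, N) = \cos(2t)$, $g(AN, \xi) = 0$, $g(JAN, N) = 0$, $g(JAN, \xi) = -\cos(2t)$, one obtains
\[
AN|_{\cC} = \sin(2t)\bigl(\sin(t) X - \cos(t) JY\bigr), \qquad JAN|_{\cC} = \sin(2t)\bigl(\sin(t) JX + \cos(t) Y\bigr),
\]
both nonzero and mutually orthogonal for $t \in (0, \pi/4]$. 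Hence $\Q$ is the codimension-two subspace of $\cC$ orthogonal to this pair, and one identifies the $\R$-plane in question with $\C(JX + Y) = \R(JX+Y) + \R(-X+JY)$, yielding (ii) (most transparently at the $\A$-isotropic angle $t = \pi/4$, where $\sin t = \cos t$). Specializing to $t = \pi/4$ gives (iii): $\cos(2t) = 0$ makes $AN$ and $A\xi = JAN$ already lie in $\cC$ with $g(AN, A\xi) = g(N,\xi) = 0$, producing $\Q = \cC \ominus (\R AN \oplus \R A\xi)$ and the pairwise orthogonality of $\{N, \xi, AN, A\xi\}$.

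The main delicate point is the reduction $\Q \subseteq \cC$: this shows the maximal $\A$-invariant subspace is strictly smaller than the raw set $\{Z \in T_{[z]}M : AZ, JAZ \in T_{[z]}M\}$ whenever $N$ is not $\A$-principal, which is exactly what produces the correct codimension two (rather than one) in (ii) and (iii). Everything else reduces to linear algebra in the four-dimensional real subspace $\spann_\R\{X, Y, JX, JY\}$ of $T_{[z]}Q^m$.
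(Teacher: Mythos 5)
The paper itself offers no proof of this lemma --- it is imported verbatim from \cite{BerSuh13, Suh14} --- so there is no in-paper argument to compare against; your write-up is, in substance, the standard argument of those references: put $N$ in the $\A$-angle normal form, use symmetry of $A$ and $JA$ to convert tangency of $AZ$ and $JAZ$ into the orthogonality conditions $g(Z,AN)=g(Z,JAN)=0$, and use $\A$-invariance of $\Q$ (via $JA\circ A=J$) to force $\Q\subseteq\cC$. Part (i) is complete and correct, including the $S^1$-rotation trick for $(3)\Rightarrow(4)$, and part (iii) checks out.

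The one step you should tighten is the final identification in (ii). For $t\in(0,\pi/4)$ the plane $\spann\{AN|_{\cC},\,JAN|_{\cC}\}=\spann\{\sin t\,X-\cos t\,JY,\ \sin t\,JX+\cos t\,Y\}$ is \emph{not} equal to $\C(JX+Y)=\spann\{JX+Y,\ X-JY\}$ as a subspace of $T_{[z]}Q^m$: for instance $JX+Y$ does not even lie in $\cC$ there, since $g(JX+Y,\xi)=\sin t-\cos t\neq 0$. What rescues the stated formula is that the two planes agree modulo $\C N$: a one-line computation gives $(JX+Y)|_{\cC}=(\cos t+\sin t)(\sin t\,JX+\cos t\,Y)$ and $(X-JY)|_{\cC}=(\cos t+\sin t)(\sin t\,X-\cos t\,JY)$, so for $Z\in\cC$ (hence already $Z\perp\C N$) the conditions $Z\perp\C(JX+Y)$ and $Z\perp\spann\{AN|_{\cC},JAN|_{\cC}\}$ are equivalent, and $\Q_{[z]}=\cC_{[z]}\ominus\C(JX+Y)$ follows for every $t\in(0,\pi/4]$, not just at $t=\pi/4$. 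Your parenthetical ``most transparently at the $\A$-isotropic angle'' shows you sensed the issue, but as written the identification is asserted rather than proved, and it is precisely the point where the claimed formula needs an argument. Everything else is correct.
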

Moreover, for a given point $[z] \in M$ if $N$ is $\A$-principal, respectively $\A$-isotropic, in a sufficiently small neighborhood $U \subset M$ around $[z]$ then a section $A \in \A^0$ can be chosen such that $A N = N$, respectively $A N \in \cC$, in $U$. Otherwise there exist a sufficiently small neighborhood $U$ of $[z]$ in $M$ and a section $A \in \A^0$ such that $g(AN,\xi) = 0$ in $U$. In what follows we always consider such a section when the normal vector is $\A$-principal or $\A$-isotropic. Note that if $N$ is not $\A$-principal ($\A$-isotropic) at $[z]$ then it is not $\A$-principal ($\A$-isotropic) in a neighborhood of $[z]$.

If any integral curve of the Reeb vector field $\xi$ is a geodesics of $M$, then $M$ is called a Hopf hypersurface. In this case $\xi$ is a principal curvature vector of $M$ everywhere and we have $S\xi = \alpha \xi$, where $\alpha = g(S\xi,\xi)$ is the Reeb curvature of $M$. It is shown that for a real Hopf hypersurface, $\alpha$ is constant if and only if $M$ is $\A$-principal or $\A$-isotropic, that is, the normal vector field $N$ is either $\A$-principal or $\A$-isotropic everywhere on $M$ (see \cite{Suh14}). 

Two groups $SO_{m+1} \subset SO_{m+2}$ and $U_{k+1} \subset SO_{m+2}$, with $m = 2k$, act by cohomogeneity one on $Q^m$, namely, the minimum codimension of their orbits is one. The action of $SO_{m+1}$ produces two singular orbits: one is the totally geodesic complex hypersurface $Q^{m-1} \subset Q^m$ and the other one is a totally geodesic $m$-dimensional sphere $S^m$. Tubes around a totally geodesic $Q^{m-1}$ in $Q^m$ are examples of $\A$-principal real hypersurfaces. These tubes admit contact structure and, in fact, it is proved that they are the only contact $\A$-principal real Hopf hypersurfaces in $Q^m$. 

The action of $U_{k+1}$ on even dimensional complex quadric $Q^m$ ($m = 2k$) also has two singular orbits and both of them are $\C P^k$ embedded in $Q^m$ as a complex submanifold. Tubes around a totally geodesic $\C P^k$ are of radius $0 < r < \pi/2$ and they are examples of $\A$-isotropic hypersurfaces in $Q^m$. Properties of these tubes are given in the following theorem.
\begin{theorem}[\cite{BerSuh13}] \label{tubes-CPk-properties}
Let $M$ be a tube of radius $0 < r < \pi/2$ around a totally geodesic $\C P^k$ in $Q^{2k}$, $k \geq 2$.  Then
\begin{itemize}
\item[($i$)]
$M$ is a Hopf hypersurface.

\item[($ii$)]
The tangent bundle $TM$ and the normal bundle $\nu M$ of $M$ consist of $\A$-isotropic singular tangent vectors of $Q^{2k}$.

\item[($iii$)]
The eigenvalues of the shape operator $S$ of $M$ and their multiplicities and corresponding eigenspaces are given in Table~\ref{table1}.

\item[($iv$)]
The shape operator $S$ and the structure tensor $\phi$ of $M$ commute with each other, that is, $S\phi = \phi S$.
\begin{table}\label{table1}
\caption{Eigenvalues of the shape operator of $M$}
\begin{tabular}{ccc}
\hline 
Eigenvalue & Eigenspace & Multiplicity \\ 
\hline \hline
$t \cot(2r)$ & $\R\xi$ & $1$ \\  
$0$ & $\cC \ominus \Q$ & $2$ \\ 
$-\tan(r)$ & $T\C P^k \ominus(\cC \ominus \Q)$ & $2k-2$ \\ 
$\cot(r)$ & $\nu\C P^k \ominus \C\nu M$ & $2k-2$\\
\hline 
\end{tabular} 
\end{table}

\item[($v$)]
The Reeb flow on $M$ is an isometric flow.
\end{itemize}
\end{theorem}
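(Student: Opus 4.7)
\textit{Plan.} My approach is the classical tube method: realise $M$ as the set of points at normal distance $r\in(0,\pi/2)$ from the totally geodesic $P=\C P^k$ in $Q^{2k}$, and extract all five conclusions from a Jacobi field analysis along unit-speed geodesics normal to $P$.

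\textit{Step 1: embedding and $\A$-isotropy.} I would begin by fixing the explicit embedding
\[
\C P^k\hookrightarrow Q^{2k},\qquad [w_0:\cdots:w_k]\longmapsto[w_0:iw_0:\cdots:w_k:iw_k],
\]
whose image lies in $Q^{2k}$ because $\sum_j(w_j^2+(iw_j)^2)=0$; total geodesicity follows from recognising $P$ as a connected component of the fixed set of a holomorphic isometric involution of $Q^{2k}$. At $p\in P$ the subspaces $T_pP$ and $\nu_pP$ are complex, and for the conjugation $A\in\A_p$ coming from the standard real structure on $\C^{2k+2}$, a direct Hermitian calculation yields $g(AW,W)=g(AW,JW)=0$ for every $W\in T_pP\cup\nu_pP$. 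Hence every such $W$ is $\A$-isotropic singular, which proves $(ii)$ at base points; the general case at arbitrary tube points then follows from parallel transport in Step~2.

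\textit{Step 2 and 3: parallel decomposition and principal curvatures.} Along a unit normal geodesic $\gamma(t)=\exp_p(t\eta)$ I would select, as permitted by the remark after Lemma~\ref{N-Type-characterization}, a local section $A\in\A^0$ with $g(AN,\xi)=0$. Because $\eta$ is $\A$-isotropic, combining $\bar{\nabla}A=JA\otimes q$ with this choice forces $q(\gamma')=0$, so $A$ is parallel along $\gamma$ and the orthogonal splitting from Lemma~\ref{N-Type-characterization}(iii) is preserved by parallel transport. Substituting into \eqref{R-Q}, the Jacobi operator $\bar{R}(\,\cdot\,,\gamma')\gamma'$ decomposes into four parallel blocks with curvatures $4$ on $\R\xi$, $0$ on $\R AN\oplus\R A\xi$, and $1$ on two $(2k-2)$-dimensional complex pieces inside $\Q$, one sitting in $T_pP$ and the other in $\nu_pP\ominus\R\eta$. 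On each block the Jacobi equation reduces to $Y''+\kappa Y=0$; with the totally geodesic initial data ($Y'(0)=0$ for $Y(0)\in T_pP$, and $Y(0)=0$ for $Y'(0)\in\nu_pP\ominus\R\eta$), reading off $SY(r)=-Y'(r)$ produces the values $2\cot(2r)$, $0$, $-\tan(r)$, and $\cot(r)$ respectively, matching Table~\ref{table1} and establishing $(i)$ and $(iii)$.

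\textit{Step 4 and the main obstacle.} Each of the four eigenspaces of $S$ is $J$-invariant, and $\phi\xi=0$, so $\phi$ preserves every eigenspace, giving $S\phi=\phi S$ and hence $(iv)$. Using $\nabla_X\xi=\phi SX$ from \eqref{eq0} and the skew-symmetry of $\phi$ one computes $(\mathcal{L}_\xi g)(X,Y)=g((\phi S-S\phi)X,Y)=0$, so the Reeb flow is isometric, giving $(v)$. The genuinely delicate point is Step~2: one must verify that the four-block decomposition of $T_pQ^{2k}$ persists as a parallel decomposition along the entire geodesic $\gamma$. This hinges on arranging $q(\gamma')=0$ via the $\A^0$-section choice, and hence on the fact that $\A$-isotropy of $\gamma'$ is preserved under parallel transport; once this decoupling is secured, the remaining ODE computations are routine.
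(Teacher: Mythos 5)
This theorem is quoted from \cite{BerSuh13} and the present paper supplies no proof of it, so your proposal can only be compared with the argument in that source --- and your tube/Jacobi-field method is precisely the route taken there: exhibit the totally geodesic $\C P^k$, diagonalise the normal Jacobi operator $\bar{R}(\cdot,\gamma')\gamma'$ from \eqref{R-Q} along a unit normal geodesic using the $\A$-isotropy of $\gamma'$ (eigenvalues $4$, $0$, $1$ on $\R J\gamma'$, $\C A\gamma'$ and the remaining $(4k-4)$-dimensional block), solve $Y''+\kappa Y=0$ with the totally geodesic initial data, and read off Table~\ref{table1}; items ($iv$) and ($v$) then follow from $J$-invariance of the eigenspaces and $(\mathcal{L}_\xi g)(X,Y)=g((\phi S-S\phi)X,Y)$. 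Two repairs are needed. First, your justification that the normalisation $g(AN,\xi)=0$ forces $q(\gamma')=0$ does not work: differentiating either isotropy identity along $\gamma$ produces $0=0$ precisely because $\gamma'$ is $\A$-isotropic, so no constraint on $q(\gamma')$ results. The correct (and simpler) statement is that one may take $A$ to be the parallel transport of $A_p$ along $\gamma$, which stays in $\A^0$ because $\A$ is a parallel subbundle; all four curvature blocks are then automatically parallel. You should also check explicitly that $\C A\gamma'(0)\subset T_pP$ (a one-line computation in your coordinates), since that is what yields the eigenvalue $0$ on $\cC\ominus\Q$ rather than $\pm 1/r$. Second, parallel transport gives $\A$-isotropy of $\nu M=\R\gamma'(r)$ and of the transported copies of $T_pP$ and $\nu_pP$, but not of every vector of $T_{\gamma(r)}M$: for instance $W=AN+\xi$ is tangent to $M$ and satisfies $g(AW,JW)=2$. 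So ($ii$) must be read, as it is in \cite{BerSuh13} and as it is actually used later in this paper, as the assertion that the unit normal field (hence $\xi$) is $\A$-isotropic; your parallel-transport argument does establish exactly that.
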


Moreover, it is also shown in \cite{BerSuh13} that properties ($iv$) and ($v$) are equivalent on a real Hopf hypersurface in $Q^m$ and tubes around a $\C P^k \subset Q^m$, with $m = 2k$ are the only hypersurfaces which have these properties.
  
\section{Real hypersurfaces with $\phi R_\xi = R_\xi\phi $}\label{Sec:commuting-jacobi}
In this section we study real hypersurfaces in $Q^m$ with the property that their structure Jacobi operator commutes with their structure tensor i.e., $\phi R_\xi = R_\xi\phi $.

The structure Jacobi operator for the hypersurface $M$ is defined by $R_\xi = R(.,\xi)\xi$. By \eqref{R-M} one has
\begin{align} \label{StrJacOpe}
R_\xi(X) & = X - \eta(X)\xi + (BX \wedge B\xi)\xi + g(AN,X)\phi A\xi + (SX \wedge S\xi)\xi \nonumber \\
& = X - \eta(X)\xi + (AX \wedge A\xi)\xi + g(AN,X)\phi A\xi + (SX \wedge S\xi)\xi
\end{align}
in which, since $X$ is tangent to $M$, one may replace $BX \wedge B\xi$ with $AX \wedge A\xi$. Moreover, from the Codazzi equation we get
\begin{align}\label{Codazzi1}
\begin{split}
(\nabla_XS)Y - (\nabla_YS)X =& \eta(X)\phi Y - \eta(Y)\phi X - 2g(\phi X,Y)\xi \\
& + g(AN,X)AY - g(AN,Y)AX \\
& + g(A\xi,X)JAY - g(A\xi,Y)JAX 
\end{split}
\end{align}
If we apply $\eta$ to both sides of \eqref{StrJacOpe} then 
\begin{align*}
\eta(R_\xi (X)) = & \eta(X) - \eta(X)\eta(\xi) + g(A\xi,\xi)\eta(AX) - g(AX,\xi)\eta(A\xi) \\
& - g(AN,X)\eta(AN) + g(S\xi,\xi)\eta(SX) - g(SX,\xi)\eta(S\xi)\\
=& 0
\end{align*}
for every $X \in TM$, from which and \eqref{StrJacOpe} one gets

\begin{equation}\label{phiR}
\begin{split}
\phi R_\xi(X)= &  JR_\xi(X) \\
 = & JX - \eta(X)J\xi + g(A\xi,\xi)JAX - g(A\xi,X)JA\xi \\
& - g(AN,X)JAN + g(S\xi,\xi)JSX - g(SX,\xi)JS\xi \\
= & \phi X + g(A\xi,\xi)JAX + g(A\xi,X)AN - g(AN,X)A\xi  \\
&+ g(S\xi,\xi)JSX - g(SX,\xi)JS\xi 
\end{split}
\end{equation}

On the other hand, we get $R_\xi\phi(X) = R_\xi(\phi X)$ using equation \eqref{StrJacOpe} as
\begin{equation}\label{Rphi}
\begin{split}
R_\xi\phi (X) = & \phi X + g(A\xi,\xi)A\phi X - g(AN,X)A\xi + g(A\xi,X)AN  \\
& + \eta(X)g(AN,N)AN + g(S\xi,\xi)S\phi X - g(S\phi X,\xi)S\xi \\
\end{split}
\end{equation}

Let the structure Jacobi operator $R_\xi$ commute with the structure tensor $\phi$. Then from equations \eqref{phiR} and \eqref{Rphi} we obtain
\begin{equation}\label{phiR-Rphi.0}
\begin{split}
0 = & \phi R_\xi(X) - R_\xi\phi (X) \\
= & g(A\xi,\xi)(JAX - A\phi X) + \eta(X)g(AN,N)AN + g(S\xi,\xi)(JSX - S\phi X) \\
& - g(SX,\xi)JS\xi + g(S\phi X,\xi)S\xi \\
= & g(A\xi,\xi)A\phi X + g(S\xi,\xi)(\phi S - S\phi )X - g(SX,\xi)\phi S\xi + g(S\phi X,\xi)S\xi 
\end{split}
\end{equation}

Taking inner product of \eqref{phiR-Rphi.0} with $\xi$ and using $g(A\phi X, \xi) = g(AN,X)$ implies 
\begin{equation}
0 = g(A\xi,\xi)g(AN,X)
\end{equation}
for every $X \in TM$. Thus, at each point either $g(A\xi,\xi)$ or $g(AN,X)$, for every $X \in TM$, vanishes. In the first case we have $g(AN,N)= - g(A\xi,\xi) = 0$ and in the second case it follows that $AN \in \C N$. So, by Lemma~\ref{N-Type-characterization} at each point of $M$ the normal vector $N$ is either $\A$-principal or $\A$-isotropic. Moreover, equation \eqref{phiR-Rphi.0} becomes
\begin{equation}\label{phiR-Rphi.1}
0 = \alpha(\phi S - S\phi )X - g(SX,\xi)\phi S\xi + g(S\phi X,\xi)S\xi 
\end{equation}
where $\alpha = g(S\xi,\xi)$ is the Reeb curvature function. Note that since we have not assumed the hypersurface $M$ to be Hopf, the relation $S\xi = \alpha \xi$ does not necessarily hold.

Given a point $[z] \in M$, assume that $N$ is $\A$-principal at $[z]$ and hence in a neighborhood of $[z]$ in $M$. By Lemma~\ref{N-Type-characterization} every conjugation $A \in \A$ leaves $\cC_{[z]}$ invariant. Let $A$ be a section of $\A$ that leaves $N$ invariant then the subspace $\cC_{[z]}$ splits orthogonally into 
\[
\cC_{[z]} = V(A|_\cC) \oplus JV(A|_\cC)
\]
where $V(A|_\cC)$ and $JV(A|_\cC)$ denote $+1$ and $-1$-eigenspaces of $A|_\cC$, respectively. One can see from $AJ = -JA$ that $\phi$ maps $V(A|_\cC)$ onto $JV(A|_\cC)$ and vice versa and according to \cite[Lemma~6]{Loo17},  $SY = 0$ for $Y \in JV(A|_\cC)$.

Now let $X = X_++X_- \in \cC_{[z]}$ be such that $SX = \lambda X$, for some $\lambda \in \R$, where  $X_+$ and $X_-$ are the components of $X$ with respect to the above decomposition for $\cC_{[z]}$. From \eqref{phiR-Rphi.1} we get
\begin{equation}\label{eq2}
0 = \lambda\alpha\phi X_+ +\lambda\alpha\phi X_- - \alpha\phi X_- + g(S\phi X_-,\xi)S\xi
\end{equation}
Apply $A$ to \eqref{eq2} to get:
\begin{equation}\label{eq3}
0 = -\lambda\alpha\phi X_+ +\lambda\alpha\phi X_- - \alpha\phi X_- + g(S\phi X_-,\xi)AS\xi
\end{equation}
Subtracting equations \eqref{eq2} and \eqref{eq3} gives us
\begin{equation}\label{eq4}
0 = 2\lambda\alpha\phi X_+ + g(S\phi X_-,\xi)S\xi - g(S\phi X_-,\xi)AS\xi
\end{equation}
which by taking inner product with $\xi$ concludes
\begin{align}\label{eq5}
\begin{split}
0 & = \alpha g(S\phi X_-,\xi) - g(S\phi X_-,\xi)g(AS\xi,\xi) \\
& = 2\alpha g(S\phi X_-,\xi)
\end{split}
\end{align}

If $\alpha$ is zero then we see from \eqref{phiR-Rphi.1} that $g(X,S\xi)\phi S\xi = g(X,\phi S\xi)S\xi$ and from that we get
\begin{equation}\label{eq13}
0 = g(X,S\xi)g(X,\phi S\xi)
\end{equation}
for every $X \in TM$. Since the  vectors $S\xi$ and $\phi S\xi$ are orthogonal, equation \eqref{eq13} holds for every $X \in TM$ only if $S\xi = 0$, which, in fact, means that $M$ is Hopf. Next, let $\alpha$ be non-zero, then from \eqref{eq5} we get $g(S\phi X_-,\xi) = 0$ and then by \eqref{eq4} one concludes $0 = \lambda\alpha\phi X_+$. By our assumption of $m \geq 3$ and given the fact that the real codimension of $\cC$ in $TM$ is one, it can be seen that either there exists some $X \in \cC_{[z]}$ such that $SX = \lambda X$ with $\lambda \neq 0$ or the shape operator $S$ is a scalar multiplication of the identity on $\cC_{[z]}$. The first cannot happen since then it follows from \eqref{eq5} that $\alpha = 0$. In the second case $M$ is Hopf and according to \cite{Loo17} it is (locally) a tube around a totally geodesic $Q^{m-1}$ in $Q^m$. As we have considered $A$ such that $AN = N$, $M$ turns out to have three distinct constant principal curvatures $\alpha$, $-\alpha$ and $0$ of respective multiplicities $1$, $m-1$ and $m-1$ and corresponding principal spaces $\R\xi$, $V(A|_\cC)$ and $JV(A|_\cC)$ (see \cite{BerSuh12}). Now, from \eqref{phiR-Rphi.1} we obtain:
\begin{equation}
(A\phi - \phi A)X = \alpha(S\phi - \phi S)X
\end{equation}
for every $X \in TM$. The above equation for $X \in V(A|_\cC)$ gives $2\phi X = \alpha^2 \phi X$ which clearly cannot hold if $X$ is a non-zero vector. So, all together, we have proved:

\begin{proposition}\label{N-isotropic}
Let $M$ be a real hypersurface in $Q^m$ such that $R_\xi\phi = \phi R_\xi$. Then, the unit normal vector field $N$ is $\A$-isotropic everywhere on $M$.
\end{proposition}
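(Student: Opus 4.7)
The plan is to convert the commutation $R_\xi\phi = \phi R_\xi$ into a pointwise algebraic identity on $TM$ and then extract information about the $\A$-angle of $N$ from it. I would begin from the explicit expression \eqref{StrJacOpe} for $R_\xi$ and compute $\phi R_\xi X$ and $R_\xi\phi X$ separately, using $JX = \phi X + \eta(X)N$ together with the anti-commutation $AJ = -JA$. After subtracting, the commutator becomes a linear combination of $A\phi X$, $AN$, the $\phi$-commutator with $S$, and rank-one correction terms built from $S\xi$ and $\phi S\xi$, with scalar coefficients given by $g(A\xi,\xi)$, $g(AN,N)$, $\alpha$, $g(SX,\xi)$ and $g(S\phi X,\xi)$.

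The decisive observation is that pairing this identity with $\xi$ eliminates every shape-operator contribution: $\phi SX$ and $\phi S\xi$ both lie in $\cC$, so the terms proportional to $(\phi S - S\phi)X$, $\phi S\xi$, and $S\xi$ either vanish outright or cancel in matched pairs. What remains is the clean scalar equation $g(A\xi,\xi)\,g(AN,X) = 0$ for every $X \in TM$. Hence, at each point, either $g(A\xi,\xi) = 0$ (equivalently $g(AN,N) = 0$) or $AN$ is perpendicular to the whole tangent space, which forces $AN \in \C N$. By Lemma~\ref{N-Type-characterization} these are exactly the $\A$-isotropic and $\A$-principal conditions, so proving the proposition reduces to excluding $\A$-principal points.

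The main obstacle is precisely this exclusion, since no contradiction appears until one uses the detailed structure forced by $\A$-principality. Suppose $N$ is $\A$-principal in a neighborhood and choose $A$ with $AN = N$, giving the splitting $\cC = V(A|_\cC) \oplus JV(A|_\cC)$; I would then invoke \cite{Loo17} to get $SY = 0$ on $JV(A|_\cC)$. Plugging an $S$-eigenvector $X = X_+ + X_-$ with eigenvalue $\lambda$ into the simplified identity \eqref{phiR-Rphi.1}, applying $A$, and subtracting gives a relation whose $\xi$-component forces $2\alpha\,g(S\phi X_-,\xi) = 0$. The argument now splits on $\alpha$: if $\alpha = 0$, the orthogonality of $S\xi$ and $\phi S\xi$ together with $g(X,S\xi)\,g(X,\phi S\xi) = 0$ for all $X$ forces $S\xi = 0$; if $\alpha \neq 0$, a dimension count using $m \geq 3$ rules out any nonzero $S$-eigenvalue on $\cC$, so $S|_\cC$ must be scalar. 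Either way $M$ is Hopf, and in the nonzero case the rigidity result of \cite{Loo17} identifies $M$ locally with a tube around $Q^{m-1}$ whose principal curvatures are $\alpha, -\alpha, 0$. The contradiction then comes by specializing the Hopf-simplified commutator to a nonzero $X \in V(A|_\cC)$, which yields a numerical identity of the form $c\,\phi X = \alpha^2\,\phi X$ with $c \neq \alpha^2$; the crux lies in coordinating the eigenspace analysis with the rigidity classification to force this final impossibility.
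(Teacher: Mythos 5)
Your proposal follows the paper's proof essentially step for step: the same pairing of the commutator with $\xi$ to force $g(A\xi,\xi)\,g(AN,X)=0$ and reduce to excluding $\A$-principal points, the same appeal to \cite{Loo17} with the $X=X_++X_-$ decomposition and the application of $A$, the same case split on $\alpha$, and the same final numerical contradiction on $V(A|_\cC)$ when $\alpha\neq 0$. The only caveat (shared with the paper) is that your $\alpha=0$ branch ends with $S\xi=0$, i.e.\ $M$ Reeb flat and Hopf, rather than an explicit contradiction to $\A$-principality; that subcase is actually closed only by the separate Codazzi-based argument of Proposition~\ref{isotropic pro}.
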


From now on we assume that the normal vector field $N$ is $\A$-isotropic everywhere on $M$.

\begin{proposition}\label{M-Hopf}
Let $M$ be a real hypersurface in $Q^m$ such that $R_\xi\phi = \phi R_\xi$. Then, $M$ is a Hopf hypersurface.
\end{proposition}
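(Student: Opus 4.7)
The plan is a proof by contradiction. By Proposition~\ref{N-isotropic}, the normal $N$ is $\A$-isotropic everywhere, so by Lemma~\ref{N-Type-characterization}(iii) I can locally choose $A \in \A^0$ with $AN \in \cC$; this gives the identities $g(AN,\xi) = g(AN,N) = g(A\xi,\xi) = 0$ and $\phi(AN) = A\xi$, $\phi(A\xi) = -AN$. Write $S\xi = \alpha\xi + W$ with $W \in \cC$, and suppose, for contradiction, that $W$ is nonzero on a nonempty open set $\mathcal{O}$. The case $\alpha \equiv 0$ was already handled in the paragraph preceding Proposition~\ref{N-isotropic}, so I may assume $\alpha \neq 0$ on $\mathcal{O}$; set $\beta := \|W\|$ and $U := W/\beta$.

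First I would extract the algebraic consequences of the Jacobi-commuting identity \eqref{phiR-Rphi.1}. Substituting $X = U$ gives $\phi SU - S\phi U = (\beta^2/\alpha)\phi U$; substituting $X = \phi U$ gives $\phi S\phi U + SU = \beta\xi + (\beta^2/\alpha) U$; and for any $X$ in the complementary subbundle $V := \cC \ominus \spann\{U,\phi U\}$ one has $g(X,W) = g(\phi X, W) = 0$, so \eqref{phiR-Rphi.1} collapses to $\phi SX = S\phi X$. Together, these pin down $SU$ and $S\phi U$ modulo $V$, force $S|_V$ to commute with $\phi|_V$, and show that $\phi S - S\phi$ has rank at most $2$ with image in $\spann\{\phi W, \alpha\xi + W\}$.

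Next I would feed these algebraic data into the Codazzi equation \eqref{Codazzi1} with $Y = \xi$, which, using our choice of $A$, simplifies to
\[
(\nabla_X S)\xi - (\nabla_\xi S)X \;=\; -\phi X + g(AN,X)\,A\xi - g(A\xi,X)\,AN .
\]
Expanding the left-hand side via $(\nabla_X S)\xi = (X\alpha)\xi + \alpha \phi SX + \nabla_X W - S\phi SX$ and successively taking inner products with $\xi$, $U$, $\phi U$, and arbitrary $Y \in V$, one obtains first-order differential relations for $\alpha$, $\beta$, and $\nabla U$. Substituting the Step~1 identities for $SU$ and $S\phi U$ into these Codazzi relations should yield a closed scalar system in $\alpha, \beta$ that has no solution with $\beta \neq 0$, thereby giving the contradiction.

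The main obstacle I expect is this last closing step. The Codazzi equation injects contributions involving $AN$ and $A\xi$, which need not align with $U$ or $\phi U$; one must decompose $U$ (and, using the ambient identity $\bar\nabla A = JA \otimes q$, also $\nabla_X(AN)$) according to the splitting $\cC = \Q \oplus \spann\{AN, A\xi\}$ from Lemma~\ref{N-Type-characterization}(iii) and track all components to see the incompatibility. A useful intermediate targets would be to show $\beta$ is constant along $\xi$ and that $U \in \Q$, both of which should follow from a judicious choice of test vectors in the above inner products.
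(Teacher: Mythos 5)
Your setup is sound: the reduction to the case $\alpha\neq 0$ via the pointwise argument ``$\alpha=0$ forces $S\xi=0$'' (which indeed uses only \eqref{phiR-Rphi.1} and so transfers out of the $\A$-principal discussion where the paper states it), and the three algebraic consequences of \eqref{phiR-Rphi.1} at $X=U$, $X=\phi U$ and $X\in V$ all check out. But the proof stops exactly where the proposition lives. The entire content of the statement is that the system you set up ``has no solution with $\beta\neq 0$,'' and this is asserted (``should yield \ldots''), not derived; you yourself flag it as the main obstacle. Until that computation is carried out, nothing has been proved. Two further cautions: (a) your two identities at $X=U$ and $X=\phi U$ are not independent --- applying $\phi$ to the first reproduces the second --- so they give a single vector relation between $SU$ and $S\phi U$ rather than ``pinning them down''; (b) it is not clear the tangential Codazzi equation with $Y=\xi$ alone closes the system. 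The paper's own argument has to leave the hypersurface: it compares $\bar{\nabla}_\xi(\phi S\xi)$ with the Weingarten formula for $\bar{\nabla}_\xi(S\xi)$, using $\bar{\nabla}J=0$, to extract the key relation $\alpha\, JS\xi=0$; that input is not obviously recoverable from your proposed inner products.

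For contrast, the paper's proof is much shorter and does not use Proposition~\ref{N-isotropic} or the Codazzi equation at all. One covariantly differentiates the identity \eqref{phiR-Rphi.1} and evaluates at $X=\xi$ to get $Y(\alpha)\,\phi S\xi = g((\nabla_Y S)\xi,\xi)\,\phi S\xi$, compares with $Y(\alpha)=g((\nabla_YS)\xi,\xi)+2g(S\phi SY,\xi)$ to obtain the dichotomy $\phi S\xi=0$ (Hopf) or $g(SY,\phi S\xi)=0$ for all $Y$; in the second case the ambient computation just described forces $\alpha=0$, and then \eqref{phiR-Rphi.1} with $\alpha=0$ and $X=S\xi$ gives $g(S\xi,S\xi)\,\phi S\xi=0$, hence $S\xi=0$. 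If you want to salvage your route, the concrete missing step is to actually write out the $\xi$-, $U$-, $\phi U$- and $V$-components of $(\nabla_XS)\xi-(\nabla_\xi S)X$ and exhibit the inconsistency; as it stands the argument is a plausible plan, not a proof.
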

\begin{proof}
Given $Y \in TM$, if we take covariant derivative of \eqref{phiR-Rphi.1} in $Y$ direction and put $X=\xi$ we get 
\begin{align*}
Y(\alpha)\phi S\xi = g((\nabla_YS)\xi,\xi)\phi S\xi  
\end{align*}

On the other hand, we have
\begin{align*}
Y(\alpha) & = g(\nabla_YS\xi,\xi) + g(S\xi,\nabla_Y\xi) \\
& = g((\nabla_YS)\xi,\xi) + 2g(S\phi SY,\xi)
\end{align*}

Comparing these two equations yields $\phi S\xi = 0$ or $g(SY,\phi S\xi) = - g(S\phi SY,\xi) = 0$. If $\phi S\xi = 0$ at a point, then $S\xi \in \R\xi$ which means $M$ is Hopf at that point. For the other case, let $\phi S\xi$ be non-zero, then $g(SY,\phi S\xi)$ vanishes for every $Y \in TM$. By taking covariant derivative of $\phi S\xi$ in the direction of $\xi$ in $Q^m$ we get
\begin{align*}
\bar{\nabla}_\xi \phi S\xi & = \nabla_\xi \phi S\xi + g(S\xi, \phi S\xi)N \\
& = (\nabla_\xi \phi )(S\xi) + \phi \nabla_\xi S\xi \\
& = \alpha S\xi - g(S\xi, S\xi)\xi + \phi \nabla_\xi S\xi
\end{align*}
Applying $J$ to the above equation gives
\begin{align*}
\bar{\nabla}_\xi J\phi S\xi & = - \bar{\nabla}_\xi S\xi + \bar{\nabla}_\xi\eta(\phi S\xi)N \\
& = \alpha JS\xi - g((S\xi,S\xi)N - \nabla_\xi S\xi + \eta(\phi \nabla_\xi S\xi)N \\
& = \alpha JS\xi - g((S\xi,S\xi)N - \nabla_\xi S\xi
\end{align*}
while according to Weingarten formula $\bar{\nabla}_\xi S\xi = \nabla_\xi S\xi - g(S\xi,S\xi)N$. So, it follows that $\alpha JS\xi = 0$ which by our assumption that $\phi  S\xi$ and hence  $JS\xi$ being non-zero, concludes $\alpha = 0$. Putting $\alpha = 0$ into \eqref{phiR-Rphi.1} implies 
\[
 g(SX,\xi)\phi S\xi = g(S\phi X,\xi)S\xi
\]
which for $X = S\xi$ becomes
\[
g(S\xi,S\xi)\phi S\xi = g(\phi S\xi,S\xi)S\xi
\]

The right hand side of the above equation is zero by our assumption and thus we obtain $g(S\xi,S\xi) = 0$, and consequently, $S\xi = 0$. So, again the hypersurface $M$ turns out to be Hopf. 
\end{proof}

Since we have proved that $M$ is Hopf, we can use the relation $S\xi = \alpha\xi$ hereafter. It is proved in \cite{BerSuh13} that a $\A$-isotropic real Hopf hypersurface in $Q^m$ has constant Reeb curvature. Hence, Proposition~\ref{M-Hopf} together with Proposition~\ref{N-isotropic} implies

\begin{proposition}\label{alpha-constant}
Let $M$ be a real hypersurface in $Q^m$ such that $R_\xi\phi = \phi R_\xi$. Then, the Reeb curvature function $\alpha$ is constant.
\end{proposition}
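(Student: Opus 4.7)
My plan for this proposition is essentially a one-line assembly of the two immediately preceding results together with a fact cited from \cite{BerSuh13}. First I would apply Proposition~\ref{N-isotropic} to conclude that the hypothesis $R_\xi \phi = \phi R_\xi$ forces the unit normal $N$ to be $\A$-isotropic at every point of $M$. Second I would apply Proposition~\ref{M-Hopf} to conclude that $M$ is Hopf, so $S\xi = \alpha\xi$ with $\alpha = g(S\xi,\xi)$. Third, I would invoke the theorem of Berndt and Suh, stated in the paragraph just above the proposition, which asserts that any $\A$-isotropic real Hopf hypersurface in $Q^m$ has constant Reeb curvature. Combining the three gives the claim at once.

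If one preferred a self-contained derivation of the third step rather than a citation, the plan would be to differentiate $S\xi = \alpha\xi$ in an arbitrary direction $X \in TM$ and use $\nabla_X\xi = \phi SX$ to get
\[
(\nabla_X S)\xi = X(\alpha)\xi + \alpha\phi SX - S\phi SX,
\]
so that pairing with $\xi$ (and using $\phi SX \perp \xi$) yields $X(\alpha) = g((\nabla_X S)\xi,\xi)$. Plugging $Y=\xi$ into the Codazzi identity \eqref{Codazzi1} and using the $\A$-isotropic relations $g(AN,\xi) = g(A\xi,\xi) = g(AN,N) = 0$, $JA\xi = AN$, $JAN = -A\xi$, one reduces the right-hand side to $-\phi X + g(AN,X)A\xi + g(A\xi,X)AN$, which is orthogonal to $\xi$; hence $g((\nabla_X S)\xi,\xi) = g((\nabla_\xi S)X,\xi)$, and a short expansion then produces $X(\alpha) = \xi(\alpha)\eta(X)$, i.e.\ $\mathrm{grad}\,\alpha \in \R\xi$.

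The remaining and main obstacle in a self-contained argument is showing $\xi(\alpha) = 0$. For this I would pick a non-Reeb principal vector $V$ of $S$, apply the Codazzi identity to the pair $(\xi, V)$, and exploit the $\A$-isotropic decomposition of $\cC$ given in Lemma~\ref{N-Type-characterization}(iii) together with the constraints on the principal curvatures of $\A$-isotropic Hopf hypersurfaces. Because this last technical computation is precisely what is carried out in \cite{BerSuh13} and \cite{Suh14}, the cleanest and intended presentation of the proof is simply to quote their theorem, which is the route I would take.
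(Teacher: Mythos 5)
Your proposal is correct and matches the paper's proof exactly: the paper likewise derives the result by combining Proposition~\ref{N-isotropic} and Proposition~\ref{M-Hopf} with the cited fact from \cite{BerSuh13} (see also \cite{Suh14}) that an $\A$-isotropic real Hopf hypersurface in $Q^m$ has constant Reeb curvature. The self-contained derivation you sketch is a reasonable alternative but is not needed; the citation is the intended route.
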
 

The typical examples of real Hopf hypersurfaces in $Q^m$ with $\A$-isotropic normal vector field are tubes around a totally geodesic $\C P^k$, with $m = 2k$. We conclude this section with the proof of our main result of the section, Theorem~\ref{alpha-non-zero}, which determines these tubes as the only hypersurfaces with non-zero Reeb curvature satisfying the relation $R_\xi\phi = \phi R_\xi$. 

\begin{proof}[Proof of Theorem~\ref{alpha-non-zero}]
Note that equations \eqref{phiR} and \eqref{Rphi} now become
\[
\phi R_\xi(X)= \phi X + g(A\xi,X)AN - g(AN,X)A\xi + \alpha \phi SX  
\]
and
\[
R_\xi\phi (X) = \phi X + g(A\xi,X)AN - g(AN,X)A\xi + \alpha S\phi X 
\]
and thus we have
\begin{equation}\label{phiR-Rphi.2}
\phi R_\xi(X) - R_\xi\phi (X) = \alpha(\phi S - S\phi )X 
\end{equation}

By Theorem~\ref{tubes-CPk-properties}, tubes around $\C P^k \subset Q^m$, with $m = 2k$, have isometric Reeb flow, or equivalently, have commuting shape operator ($S\phi = \phi S$). Moreover, these tubes are the only real Hopf hypersurfaces in $Q^m$ with isometric Reeb flow. On the other hand, according to \eqref{phiR-Rphi.2}, if $\alpha$ is non-zero the condition $R_\xi\phi = \phi R_\xi$ on $M$ is equivalent to having commuting shape operator. 
\end{proof}

\section{Reeb flat real Hopf hypersurfaces}\label{Sec:Reeb-flat}
In this section we investigate real Hopf hypersurfaces in $Q^m$ with $\alpha = 0$ known as Reeb flat hypersurfaces. 

As we mentioned earlier for a real Hopf hypersurface in $Q^m$ if $\alpha$ is constant then the normal vector field $N$ is either $\A$-principal or $\A$-isotropic everywhere on $M$. Our first result in this section rules out that $N$ can be $\A$-principal in the Reeb flat case:

\begin{proposition}\label{isotropic pro}
Let $M$ be a Reeb flat real Hopf hypersurface in $Q^m$. Then, the normal vector field $N$ is $\A$-isotropic everywhere on $M$.
\end{proposition}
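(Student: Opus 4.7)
The plan is to use the criterion for constant Reeb curvature quoted just before Proposition~\ref{alpha-constant}. Since $M$ is Hopf with $\alpha = g(S\xi,\xi) = 0$, that criterion places us in one of two alternatives: either $N$ is $\A$-principal everywhere, or $N$ is $\A$-isotropic everywhere. I would rule out the first possibility by deriving a contradiction from the Codazzi equation and a symmetry argument for $\nabla_\xi S$.

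Assume toward contradiction that $N$ is $\A$-principal. Locally pick $A \in \A^0$ with $AN = N$. The relation $AJ = -JA$ together with $\xi = -JN$ forces $A\xi = -\xi$, so $A$ preserves $\cC$ and we obtain the orthogonal splitting $\cC = V(A|_\cC) \oplus JV(A|_\cC)$, in which $\phi$ interchanges the two summands bijectively (both have dimension $m-1$). By \cite[Lemma~6]{Loo17}, invoked already in Section~\ref{Sec:commuting-jacobi}, the shape operator satisfies $SZ = 0$ for every $Z \in JV(A|_\cC)$.

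The key calculation is to specialize the Codazzi equation \eqref{Codazzi1} to $Y = \xi$ and $X \in \cC$. Using $AN = N$, $A\xi = -\xi$ and $\eta(X) = 0$, most terms on the right drop out and one is left with the tangential identity
\[
(\nabla_X S)\xi - (\nabla_\xi S)X = -\phi X + JAX.
\]
For $X \in JV(A|_\cC)$ one has $AX = -X$, hence $JAX = -\phi X$, so the right-hand side reduces to $-2\phi X$. Meanwhile $S\xi = 0$ and the Hopf identity $\nabla_X \xi = \phi S X$ give $(\nabla_X S)\xi = -S\phi S X$, which vanishes because $SX = 0$. Therefore
\[
(\nabla_\xi S)X = 2\phi X \qquad \text{for every } X \in JV(A|_\cC),
\]
with $\phi X \in V(A|_\cC)$.

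The contradiction then comes from the symmetry of the endomorphism $\nabla_\xi S$ (a routine consequence of the symmetry of $S$). For $X \in JV(A|_\cC)$ and $Y \in V(A|_\cC)$ the computation above gives $g((\nabla_\xi S)X, Y) = 2 g(\phi X, Y)$, while on the other side $g(X, (\nabla_\xi S)Y) = -g(X, S\phi S Y) = -g(SX, \phi S Y) = 0$. Hence $g(\phi X, Y) = 0$ for all such $X$ and $Y$. Since $\phi$ maps $JV(A|_\cC)$ bijectively onto $V(A|_\cC)$, this would force $g$ to vanish identically on $V(A|_\cC)$, contradicting $\dim V(A|_\cC) = m - 1 \geq 2$. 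I expect the main obstacle to be the bookkeeping in the Codazzi equation, in particular checking that the normal components of the $AY$- and $JAX$-type terms cancel so that the displayed identity is genuinely tangential; once that is in place, Loo's vanishing lemma and the symmetry of $\nabla_\xi S$ close the argument immediately.
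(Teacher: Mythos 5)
Your argument is correct and in substance coincides with the paper's: both reduce to ruling out the $\A$-principal alternative by combining the $\xi$-component of the Codazzi equation \eqref{Codazzi1} with Loo's lemma $JV(A|_\cC)\subset\ker S$ and the fact that $\phi$ maps $JV(A|_\cC)$ isomorphically onto $V(A|_\cC)$. The only cosmetic differences are that you obtain the principal/isotropic dichotomy by quoting the constant-$\alpha$ criterion from the preliminaries where the paper rederives it pointwise via \eqref{eq11}, and that your symmetry argument for $\nabla_\xi S$ (whose right-hand computation implicitly reuses the same Codazzi specialization for $Y\in V(A|_\cC)$) is exactly the paper's identity \eqref{eq6}, which in the $\A$-principal case reads $S\phi SX=\phi X$, in different packaging.
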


\begin{proof}

From \eqref{Codazzi1} we get 
\begin{align*}
\frac{1}{2}\eta((\nabla_{X}S)Y-(\nabla_{Y}S)X)=& -g(\phi X,Y)+g(X,AN)g(Y,A\xi)\\
& -g(Y,AN)g(X,A\xi)
\end{align*}
in which we used $g(JX,A\xi)=-g(X,JA\xi)=g(X,AN)$. On the other hand, using $\nabla_X\xi = \phi SX$ and the assumption $S\xi=0$ one gets
\begin{align*}
\eta((\nabla_{X}S)Y)&=g(\nabla_{X}SY,\xi)-g(\nabla_{X}Y,S\xi)\\
&=Xg(SY,\xi)-g(SY,\nabla_{X}\xi)\\
& =g(\phi SY,SX)\\
& =g(S\phi S Y,X)
\end{align*}
and similarly $g((\nabla_{Y}S)X,\xi)=g(S\phi S Y,X)$. So
\begin{align*}
\eta((\nabla_{X}S)Y-(\nabla_{Y}S)X)=2g(S\phi S Y,X)=-2g(Y,S\phi SX)
\end{align*}

Comparing the above two expressions for $\eta((\nabla_{X}S)Y-(\nabla_{Y}S)X)$ we obtain  
\begin{align}\label{eq6}
\begin{split}
g(S \phi SX,Y)=& g(\phi X,Y)-g(X,AN)g(Y,A\xi)\\ & +g(Y,AN)g(X,A\xi)
\end{split}
\end{align}
which for $X=\xi$ yields
\begin{equation}\label{eq11}
g(\xi,AN)g(Y,A\xi)=g(Y,AN)g(\xi,A\xi)
\end{equation}

Let $A$ be a conjugation for which $N=\cos(t) Z_{1}+\sin(t) JZ_{2}$ where $Z_1,Z_2 \in V(A)$ are orthonormal and $0\leq t \leq \pi/4$. Then 
\begin{align*}
AN & =\cos(t) Z_{1}-\sin(t) JZ_{2}, \\
\xi & =\sin(t) Z_{2}-\cos(t) JZ_{1},\\ 
A\xi & =\sin(t)Z_{2}+\cos(t)JZ_{1}.
\end{align*}
So, $g(\xi,AN)=0$ and from \eqref{eq11} we get $g(Y,AN)g(\xi,A\xi)=0$. If $g(\xi,A\xi)=0$ then $N$ is $\A$-isotropic. Otherwise $g(Y,AN)=0$ for every $Y$, which means that $AN \in \C N$ and thus by Lemma~\ref{N-Type-characterization} $N$ is $\A$-principal and we may assume $AN = N$.

If $N$ is $\A$-principal then as in the proof of Proposition~\ref{N-isotropic} one can see that $J(A|_{\cC}) \subset \ker S$. On the other hand, if $SX = 0$ for any $X \in \cC$ (i.e., $\eta(X) = 0$) then by \eqref{eq6} we get $g(\phi X , Y) = g(S \phi S X , Y) = 0$, for every $Y$, meaning that $\phi X = 0$. Hence, we have $0 = \phi^2 X = -X - \eta(X)\xi = -X$ which implies $J(A|_{\cC}) = \{0\}$ and this is a contradiction since we have assumed that $m \geq 3$. 
\end{proof}

For any non-zero real eigenvalue $\lambda$ of the shape operator $S$, let $\Q_{[z]}(\lambda)$ denote its corresponding eigenspace of $S$ in $T_{[z]}M$. One can see that $\Q_{[z]}(\lambda) \subset \Q_{[z]}$, and as a result $\Q_{[z]}$ decomposes into $\Q_{[z]}= \bigoplus_{\lambda \neq 0}\Q_{[z]}(\lambda)$. We have the following proposition on eigenvalues of the shape operator of $M$.

\begin{proposition}\label{lambda}
If $0 \neq \lambda$ is an eigenvalue of $S$ then so is $\frac{1}{\lambda}$ and for every $X \in \Q(\lambda)$ we have $\phi X \in \Q(\frac{1}{\lambda})$. In particular, $dim \Q(\lambda) = dim \Q(\frac{1}{\lambda})$.
\end{proposition}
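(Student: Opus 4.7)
The plan is to exploit the operator identity underlying equation~\eqref{eq6}, which was already derived in the proof of Proposition~\ref{isotropic pro} and is valid for every $X \in TM$:
\begin{equation*}
S\phi SX = \phi X - g(X, AN)\,A\xi + g(X, A\xi)\,AN.
\end{equation*}
Given $X \in \Q(\lambda)$ with $\lambda \neq 0$, the inclusion $\Q(\lambda) \subset \Q$ recorded immediately before the proposition yields $g(X, AN) = g(X, A\xi) = 0$, so the identity collapses to $S\phi SX = \phi X$. Substituting $SX = \lambda X$ and dividing by $\lambda$ then gives $S\phi X = \tfrac{1}{\lambda}\phi X$, with $\phi X \neq 0$ because $X \in \cC \setminus \{0\}$ and $\phi|_{\cC}$ is injective. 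This already produces the eigenvalue $\tfrac{1}{\lambda}$.

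To upgrade this to the statement $\phi X \in \Q(\tfrac{1}{\lambda})$, I would check that $\phi$ preserves $\Q$. A short computation using $JAN = A\xi$ (which follows from $N = J\xi$ together with $AJ = -JA$) shows $\phi(AN) = A\xi$ and $\phi(A\xi) = -AN$, so the two-plane $\spann\{AN, A\xi\}$ is $\phi$-invariant. Since $\phi|_{\cC}$ is an isometry satisfying $\phi^{2} = -\id$, it also preserves the orthogonal complement of this two-plane inside $\cC$, which is precisely $\Q$. Hence $\phi X \in \Q$ and therefore $\phi X \in \Q(\tfrac{1}{\lambda})$.

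For the dimension equality, I would observe that the map $\phi : \Q(\lambda) \to \Q(\tfrac{1}{\lambda})$ is injective because $\phi|_{\cC}$ has trivial kernel; by symmetry of the argument with $\lambda$ and $\tfrac{1}{\lambda}$ interchanged, $\phi : \Q(\tfrac{1}{\lambda}) \to \Q(\lambda)$ is injective as well, so $\phi$ restricts to a bijection between the two eigenspaces and $\dim \Q(\lambda) = \dim \Q(\tfrac{1}{\lambda})$.

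The proof is essentially a one-line substitution into \eqref{eq6}, so no real obstacle arises; the only point requiring care is the bookkeeping that makes the $\phi$-invariance of $\Q$ transparent, i.e.\ the identities $\phi AN = A\xi$ and $\phi A\xi = -AN$ coming from $\A$-isotropy of $N$.
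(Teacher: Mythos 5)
Your proof is correct, and while it starts from the same key identity as the paper's --- equation \eqref{eq6} in the operator form $S\phi SX=\phi X-g(X,AN)A\xi+g(X,A\xi)AN$ --- it extracts the conclusion by a genuinely shorter route. The paper first applies $S$ to both sides (using $SAN=SA\xi=0$ from \cite{PerJeoKoSuh18}) to get the global identity $S^{2}\phi SX=S\phi X$ of \eqref{eq8}, then pairs it with an eigenvector $Y\in\Q(\mu)$ satisfying $g(\phi X,Y)\neq 0$ to derive $\lambda\mu^{2}=\mu$, and finally expands $\phi X$ over the decomposition $\Q=\bigoplus_{\mu\neq 0}\Q(\mu)$ to conclude $\phi X\in\Q(\frac{1}{\lambda})$. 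You instead note that the two correction terms vanish identically on $\Q(\lambda)\subset\Q=\cC\ominus(\R AN\oplus\R A\xi)$, so the identity collapses to $S\phi SX=\phi X$ there, and substituting $SX=\lambda X$ yields $S\phi X=\frac{1}{\lambda}\phi X$ in one line; since $\Q(\frac{1}{\lambda})$ is by definition the full $\frac{1}{\lambda}$-eigenspace of $S$ in $T_{[z]}M$, this already gives $\phi X\in\Q(\frac{1}{\lambda})$, and your extra verification that $\phi$ preserves $\Q$ (via $\phi AN=A\xi$, $\phi A\xi=-AN$), while correct, is not actually needed for that conclusion. Your argument buys brevity and avoids the eigenspace-decomposition bookkeeping; what it does not reproduce is the intermediate observation that $SX=0$ forces $S\phi X=0$ and the orthogonality of $\phi(\Q(\lambda))$ to $\ker S$, but nothing later in the paper depends on those beyond what your version also delivers, since the identity $S\phi SX=\phi X$ on $\bigoplus_{\lambda\neq 0}\Q(\lambda)$ (the paper's \eqref{eq9}) is exactly what you establish.
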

\begin{proof}
Equation \eqref{eq6} can be written as 
\begin{align*}
S\phi SX=\phi X-g(X,AN)A\xi+g(X,A\xi)AN
\end{align*}

According to Proposition~\ref{isotropic pro}, $N$ is $\A$-isotropic, and by Lemma~5.1 in \cite{PerJeoKoSuh18} we have $SA\xi=SAN=0$. By applying $S$ to both sides of the above equation one gets  
\begin{equation}\label{eq8}
S^{2}\phi SX=S\phi X\qquad \forall X\in T_{[z]}M
\end{equation}

It follows form \eqref{eq8} that if $SX=0$ then $S\phi X=0$, as well. Also, using
\[
 g(\phi X,Y)=-\frac{1}{\lambda}g(SX,\phi Y)=-\frac{1}{\lambda}g(X,S\phi Y)
\]
one can see that if $X\in \Q(\lambda)$ and $SY=0$ then $g(\phi X,Y)=0$. Let $X\in \Q(\lambda)$, $\lambda \neq 0$, and choose $Y\in \Q(\mu)$ for some $\mu$ such that $g(\phi X,Y)\neq 0$. Note that $\mu$ has to be non-zero. Then we have 
\begin{align*}
g(S^{2}\phi SX,Y)=g(\phi SX,S^{2}Y)=\lambda \mu^{2}g(\phi X,Y)
\end{align*}

We can also use \eqref{eq8} to calculate
\begin{align*}
g(S^2\phi SX,Y)=g(\phi X,SY)=\mu g(\phi X,Y)
\end{align*}

Comparing the above equations we get $\lambda \mu^{2}=\mu$ and therefore $\mu=\frac{1}{\lambda}$. So, $\frac{1}{\lambda}$ is also an eigenvalue of $S$.

Let us write $\phi X=\sum f_{i} Y_{i}$ with respect to the decomposition $\Q_{[z]}= \bigoplus_{\lambda \neq 0}\Q_{[z]}(\lambda)$ where $Y_{i}\in \Q(\mu_{i})$. As we have just shown if $f_{i}\neq 0$, for any $i$, then it turns out that $\mu_{i}=\frac{1}{\lambda}$ and thus we have $S\phi X=\sum f_{i}\frac{1}{\lambda}Y_{i}=\frac{1}{\lambda}\phi X$, hence, $\phi X\in \Q(\frac{1}{\lambda})$, as we claimed. 
 \end{proof}

So, given a basis $X_{1},\ldots, X_{k}$ for $\Q(\lambda)$, $\phi X_{1}, \ldots, \phi X_{k}$ is a basis for $\Q(\frac{1}{\lambda})$. Note also that $S\phi SX=\lambda\frac{1}{\lambda}\phi X=\phi X$ for $X \in \Q(\lambda)$ and therefore 
\begin{align}\label{eq9}
S\phi SX=\phi X
\end{align}
for all $X\in \bigoplus_{\lambda\neq0}\Q(\lambda)$.

The proof of the previous proposition, in particular, implies that the subspaces $\Q^\perp$ and $\Q(\lambda) \oplus \Q(\frac{1}{\lambda})$ are invariant under both $S$ and $\phi$.

Proposition~\ref{lambda} imposes a rather striking condition on tubes around $\C P^k$ for being flat, so that as stated in Theorem~\ref{commuting shape operator} only one of them turns out to be flat. Here is the proof of the theorem.

\begin{proof}[Proof of Theorem~\ref{commuting shape operator}]
We see from Proposition~\ref{lambda} that $S\phi X=\frac{1}{\lambda}\phi X$ and $\phi SX=\lambda \phi X$ for $X\in\Q(\lambda)$. So, the only possible non-zero values for $\lambda$ in order that the shape operator $S$ to be commuting are $+1$ and $-1$. Therefore, if $S$ is commuting it has three distinct eigenvalues $0$, $-1$, and $1$ with respective multiplicities $3$, $m-4/2$, $m-4/2$ and eigenspaces $\spann\{\xi, A\xi, AN \}$, $\Q(1)$ and $\Q(-1)$, where $\Q(1)$ and $\Q(-1)$ are $\phi$-invariant and $m$ turns out to be even. So, it follows from Theorem~\ref{tubes-CPk-properties} that $M$ is a tube of radius $\frac{\pi}{4}$ around a $\C P^k \subset Q^m$ with $m=2k$.

\end{proof}

The shape operator of $M$ is said to be of Codazzi type if $(\nabla_XS)Y=(\nabla_YS)X$ for all tangent vector fields $X$ and $Y$ on $M$.

In the next proposition we give some features of Reeb flat hypersurfaces which show that they are different from some known families of hypersurfaces such as contact, co-symplectic and (nearly) Kenmutso hypersurfaces in K\"{a}hler manifolds. 

\begin{proposition}\label{not-contact}
Let $M$ be a Reeb flat real Hopf hypersurface in $Q^m$. Then
\begin{itemize}
\item[($i$)]
the tensor $\eta((\nabla_{X}S)Y$ is not symmetric with respect to $X$ and $Y$; in particular, the shape operator of $M$ is not of Codazzi type,

\item[($ii$)]
$M$ is not contact,

\item[($iii$)]
$d\eta$ does not vanish identically at any point.
\end{itemize}
\end{proposition}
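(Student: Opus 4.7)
The plan is to reduce all three assertions to two short identities. The first expresses $\eta((\nabla_XS)Y)$ via the Reeb-flat hypothesis $S\xi=0$, and the second expresses $d\eta$ via $\nabla_X\xi=\phi SX$. Together with Propositions~\ref{isotropic pro} and~\ref{lambda}, these supply everything needed.

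First I would compute, using $S\xi=0$, that
\[
\eta((\nabla_XS)Y) = -g(SY,\phi SX) = g(S\phi SY,X).
\]
Because $\phi$ is skew-adjoint and $S$ is self-adjoint, $S\phi S$ is skew-adjoint, so the right-hand side is already antisymmetric in $X$ and $Y$. To prove~(i), then, it suffices to exhibit a single pair on which the expression does not vanish. I would take $X\in \Q(\lambda)$ for some non-zero eigenvalue $\lambda$ and $Y = \phi X\in\Q(1/\lambda)$, and apply \eqref{eq9} twice: $S\phi S(\phi X) = \phi(\phi X) = -X$, yielding $g(S\phi SY, X) = -g(X,X)\neq 0$. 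This also gives $(\nabla_XS)Y\neq(\nabla_YS)X$, so $S$ is not of Codazzi type.

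For parts~(ii) and~(iii) I would use $d\eta(X,Y) = (\nabla_X\eta)(Y) - (\nabla_Y\eta)(X)$ together with $\nabla_X\xi = \phi SX$ to obtain
\[
d\eta(X,Y) = g((\phi S + S\phi)X, Y).
\]
For~(iii), on any $X\in \Q(\lambda)$ with $\lambda\neq 0$ Proposition~\ref{lambda} gives $(\phi S + S\phi)X = (\lambda + 1/\lambda)\phi X$, which is non-zero because $\lambda + 1/\lambda\neq 0$ for every real $\lambda\neq 0$ and $\phi X\neq 0$; hence $d\eta(X,\phi X)\neq 0$ at the given point. For~(ii), if $M$ were contact there would exist a nowhere-zero function $\rho$ with $\phi S + S\phi = 2\rho\,\phi$; evaluating at $X = AN$, the relations $SAN = SA\xi = 0$ (Lemma~5.1 of \cite{PerJeoKoSuh18}) annihilate the left-hand side, while the right-hand side equals $2\rho\,\phi(AN) = 2\rho\,A\xi$, which is non-zero since $\eta(AN) = 0$ and $JAN = -AJN = A\xi$.

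The only point that needs a little care, rather than being a serious obstacle, is to know that at every point some $\Q(\lambda)$ with $\lambda\neq 0$ is non-trivial, so that the arguments above are not vacuous. This follows from the decomposition $\Q = \bigoplus_{\lambda\neq 0}\Q(\lambda)$ combined with the dimension count $\dim\Q = 2m-4\geq 2$ for $m\geq 3$. Everything else is a direct substitution into the two displayed identities.
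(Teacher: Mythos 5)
Your proof is correct and follows essentially the same route as the paper: it rests on the identity $\eta((\nabla_XS)Y)=g(S\phi SY,X)$ combined with \eqref{eq9} for ($i$), and on $d\eta(X,Y)=g((\phi S+\phi S)X,Y)$ (equivalently the paper's $\eta([X,Y])=g((S\phi+\phi S)Y,X)$) evaluated on $\Q(\lambda)$ and on $\Q^\perp\cap\cC$ for ($ii$)--($iii$). The only cosmetic differences are that you invoke the definition of contact directly with the nowhere-zero function $\rho$ and test at $AN$ (the paper uses the constant $K$ from \cite{BerSuh15} and tests at $A\xi$), and you make explicit the non-vacuousness of $\bigoplus_{\lambda\neq 0}\Q(\lambda)$, which the paper leaves implicit.
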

\begin{proof}
Let $M$ be a Reeb flat real Hopf hypersurface in $Q^m$. Using~\ref{eq0} and equation \eqref{eq9} we get
\[
\eta((\nabla_{X}S)Y)=g(\nabla_{X}SY,\xi)=g(\phi SY,SX)=g(S\phi SY,X)=g(\phi Y,X)
\]
and then
\[
\eta((\nabla_{X}S)Y-(\nabla_{Y}S)X)=2g(\phi Y,X)
\]
for $X,Y\in \Q$. The above equation shows that the tensor $\eta((\nabla_{X}S)Y)$ is not symmetric with respect to $X$ and $Y$ since the right hand side of the equation does not vanish for $Y = \phi X$. In particular, $M$ is not of Codazzi type. 

We also have
\[
\eta(\nabla_{X}Y)=X\eta(Y)-g(Y,\nabla_{X}\xi)=g(\phi Y,SX)
\]
from which we get
\begin{align*}
\eta([X,Y])&=g(\phi Y,SX)-g(\phi X,SY)\\
&=g(S\phi Y,X)+g(X,\phi SY)\\
&=g((S\phi +\phi S)Y,X)
\end{align*}
and then with $X\in \Q(\lambda)$ and $Y=\phi X$ one gets
\begin{align*}
\eta([X,\phi X])=(\lambda+\frac{1}{\lambda})\|X\|^{2}
\end{align*}
If $M$ is contact, according to \cite{BerSuh15} there exists a non-zero constant $K$ such that $(S\phi +\phi S)=K\phi $. For $X\in \Q(\lambda)$, the constant $K$ must clearly be $\lambda+\frac{1}{\lambda}$. While if we put $X=A\xi$ then, since $\phi A\xi=-AJ\xi = AN$, it follows that $SA\xi =SAN=0$ and $(S\phi +\phi S)A\xi=0$ thus $K$ should be zero, a contradiction. Therefore, $M$ cannot be a contact hypersurface.

To see that $d\eta$ does not vanish identically at any point, note that if $d\eta \equiv 0$ then $\eta([X,Y])=0$ for all $X,Y\in \R\eta^{\perp}$ and by the above equation $\lambda^{2}+1=0$ holds, which is not possible. 
\end{proof}

In what follows we give some characterizations for Reeb flat hypersurfaces in terms of Ricci tensor. 

By contracting equation \eqref{R-M} one gets
\begin{align*}
Ric(X)=&(2m-1)X-X-\phi ^{2}X-2\phi ^{2}X-g(AN,N)AX\\
&+g(AX,N)AN-g(JAN,N)JAX-X\\
&+g(JAX,N)JAN-\tr(S)SX-S^{2}X\\
=&(2m-1)X-3\eta(X)\xi -g(AN,N)AX+g(AX,N)AN\\
&-g(JAN,N)JAX+g(JAX,N)JAN+\tr(S)SX-S^{2}X
\end{align*}
which for Reeb flat hypersurfaces, where $N$ is $\A$-isotropic, becomes 
\begin{align*}
Ric(X)= &(2m-1)X-3\eta(X)\xi +g(X,AN)AN\\
&-g(AX,\xi)JAN+\tr(S)SX-S^{2}X
\end{align*}
and then we have
\begin{align*}
Ric(\xi)=&(2m-1)\xi -3\xi +g(\xi ,AN)AN \\
&+g(A\xi , \xi)JAN-\tr(S)S\xi -S^{2}\xi\\
=&(2m-4)\xi \\
Ric(A\xi)= &(2m-1)A\xi-3\eta(A\xi)\xi+g(A\xi ,AN)AN \\ 
&+g(A^{2}\xi,\xi)A\xi-\tr(S)SA\xi -S^{2}A\xi \\
=& (2m)A\xi \\
Ric(AN)= &(2m-1)AN-3\eta(AN)\xi +g(AN,AN)AN\\
& +g(A^{2}N,\xi)-\tr(S)SAN -S^{2}AN \\
=&(2m)AN
\end{align*}
and also for $X\in \Q(\lambda)$ 
\[
Ric(X)=(2m-1+\lambda.\tr(S)-\lambda^{2})X
\]

So, for a Reeb flat real Hopf hypersurface in $Q^m$, the eigenspaces $\Q^\perp$ and $\Q(\lambda)$ are eigenspaces of the Ricci operator, as well, and we have
\begin{align*}
Ric(\xi) & = (2m-4)\xi, \\
Ric(A\xi) & = (2m)A\xi, \\
Ric(AN) & = (2m)AN, \\
Ric(X) & = (2m-1+\lambda.\tr S - \lambda^2)X, \quad \text{ for all } X \in \Q(\lambda) .
\end{align*}

In the following proposition we determine non-zero principal curvatures and their principal curvature spaces of Reeb flat hypersurfaces with parallel Ricci tensor. 
   
\begin{proposition}\label{parallel ric}
Let $M$ be a Reeb flat real Hopf hypersurface in $Q^m$. If the Ricci tensor of $M$ is parallel then its shape operator has four distinct non-zero eigenvalues $\pm\sqrt{3}$, $\pm\frac{\sqrt{3}}{3}$ and their corresponding eigenspaces are of the same dimension.
\end{proposition}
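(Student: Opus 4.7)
The plan is to combine the explicit Ricci operator formula derived just above the statement with the reciprocal pairing of principal curvatures given by Proposition~\ref{lambda}. Since $\nabla Ric = 0$ forces all Ricci eigenvalues to be constant on $M$, the formula $Ric(X) = \bigl[(2m-1) + \lambda\,\tr S - \lambda^2\bigr]X$ on $\Q(\lambda)$ forces $\lambda$ and $\tr S$ to be constants as well. Further, Proposition~\ref{lambda} groups the nonzero principal curvatures into reciprocal pairs $(\lambda, 1/\lambda)$ of equal multiplicity, so it suffices to pin down the set of allowed values of $\lambda$.

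The heart of the argument is to extract a polynomial equation for $\lambda$ from $\nabla Ric = 0$. The most natural test is against $Y = \xi$: since $Ric\,\xi = (2m-4)\xi$ and $\nabla_X \xi = \phi SX$, the identity $(\nabla_X Ric)(\xi) = 0$ reads $Ric(\phi SX) = (2m-4)\,\phi SX$; for $X \in \Q(\lambda)$ one has $\phi SX = \lambda\phi X \in \Q(1/\lambda)$ by Proposition~\ref{lambda}, so this produces a scalar relation tying the Ricci eigenvalue on $\Q(1/\lambda)$ to $2m-4$. One should likewise test against $Y = A\xi$ and $Y = AN$, whose covariant derivatives invoke the $1$-form $q$ defined by $\bar{\nabla} A = q \otimes JA$; the resulting scalar conditions must then be combined so that the $q$-contributions cancel.

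The outcome expected from this combination --- after substituting $\lambda \mapsto 1/\lambda$ via Proposition~\ref{lambda} and eliminating $\tr S$ --- is a polynomial identity in $\lambda$ alone whose roots are exactly $\pm\sqrt{3}$ and $\pm\tfrac{\sqrt{3}}{3}$. These four values split into the two reciprocal pairs $(\sqrt{3}, \tfrac{\sqrt{3}}{3})$ and $(-\sqrt{3}, -\tfrac{\sqrt{3}}{3})$, so Proposition~\ref{lambda} yields $\dim \Q(\sqrt{3}) = \dim \Q(\tfrac{\sqrt{3}}{3})$ and $\dim \Q(-\sqrt{3}) = \dim \Q(-\tfrac{\sqrt{3}}{3})$; an additional symmetry between positive and negative eigenvalues, most cleanly established by deducing $\tr S = 0$ from the combined equations and the presence of all four roots as a common quadratic in $\lambda^2$, then forces all four eigenspaces to have the same dimension.

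The main obstacle is the bookkeeping of the $A$-dependent terms: since the conjugation $A$ is only locally defined and is not itself parallel, derivatives of $A\xi$ and $AN$ pick up contributions from the $1$-form $q$, and extracting a clean, $q$-independent polynomial equation demands a careful combination of the parallel-Ricci identities above. This is precisely the step where Proposition~\ref{lambda} shortcuts the lengthier direct calculation in \cite{Suh15}, since the pairing $\phi\colon \Q(\lambda) \to \Q(1/\lambda)$ automatically supplies the reciprocal root and its eigenspace dimension, rather than requiring them to be uncovered separately.
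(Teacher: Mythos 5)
Your opening move coincides with the paper's: differentiate $Ric\,\xi=(2m-4)\xi$ along $X\in\Q(\lambda)$, use $\nabla_X\xi=\phi SX=\lambda\phi X$ together with $\phi X\in\Q(\tfrac{1}{\lambda})$ from Proposition~\ref{lambda}, and compare Ricci eigenvalues. Carried out, this single test already yields the explicit scalar relation $2m-4=2m-1+\tr(S)\tfrac{1}{\lambda}-\tfrac{1}{\lambda^{2}}$, i.e.\ $3\lambda^{2}+\tr(S)\lambda-1=0$; no further tests against $A\xi$ or $AN$, and no cancellation of $q$-terms, are needed --- and you never actually perform those auxiliary computations anyway. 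The genuine gap is that the decisive facts are asserted rather than derived: you state that the ``expected outcome'' is a polynomial whose roots are exactly $\pm\sqrt{3},\pm\tfrac{\sqrt{3}}{3}$, and you propose to extract $\tr S=0$ from ``the combined equations'' and ``a common quadratic in $\lambda^{2}$,'' but no such polynomial is ever produced. In the paper, $\tr S=0$ comes from a different and entirely concrete source: the nonzero spectrum is organized into reciprocal pairs of common multiplicity $k$ via Proposition~\ref{lambda}, and the trace is computed as $\tr S=k\bigl(\lambda_{1}+\lambda_{2}+\tfrac{1}{\lambda_{1}}+\tfrac{1}{\lambda_{2}}\bigr)=\tfrac{2}{3}k\,\tr S$ using Vieta's formulas for $3\lambda^{2}+\tr(S)\lambda-1=0$; this forces $\tr S=0$, hence $\lambda=\pm\tfrac{\sqrt{3}}{3}$ and, by reciprocity, $\pm\sqrt{3}$. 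Without that multiplicity count (or some substitute) your argument never pins down $\tr S$ and the four claimed eigenvalues never materialize.

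A further point your plan leaves open, and which is exactly where the bookkeeping must be done carefully: the $\xi$-test constrains \emph{every} nonzero principal curvature by a single quadratic, which has at most two roots, whereas Proposition~\ref{lambda} adjoins the reciprocals of those roots as additional principal curvatures. Reconciling these two constraints is the crux of the proof (and of the claimed correction to \cite{Suh15}); a sketch that defers both the derivation of the quadratic and the trace identity cannot address it.
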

\begin{proof}
Let $X\in \Q(\lambda)$ then
\[
Ric(\nabla_{X}\xi)=Ric(\phi SX)=Ric(\lambda\phi X)=\left(2m-1+\tr(S)\frac{1}{\lambda}-\frac{1}{\lambda^2}\right)\lambda\phi X
\]
and
\[
\nabla_{X}Ric(\xi)=\nabla_{X}(2m-4)\xi=(2m-4)\nabla_{X}\xi=(2m-4)\phi SX
\]

Since the Ricci tensor is assumed to be parallel we get $2m-4 = 2m-1+\tr(S)\frac{1}{\lambda}-\frac{1}{\lambda^2}$ which can be simplified as $3\lambda^{2}+\tr(S)\lambda-1=0$. The latter equation shows that $\lambda$ takes two distinct non-zero values $\lambda_1$ and $\lambda_2$, which according to Proposition~\ref{lambda}, their corresponding eigenspaces have the same dimension, say $k$. So, we have
\[
\tr(S)=\lambda_{1}+\lambda_{2}+\frac{1}{\lambda_{1}}+\frac{1}{\lambda_{2}}=\frac{2}{3} k \tr(S)
\]
which implies $\tr(S)=0$ and therefore we get $\lambda_{1}=\frac{\sqrt{3}}{3}$ and $\lambda_{2}=-\frac{\sqrt{3}}{3}$. Thus, the Ricci tensor of $M$ has four distinct non-zero eigenvalues $\pm\sqrt{3}$, $\pm\frac{\sqrt{3}}{3}$.
\end{proof}

\begin{proof}[Proof of Theorem~\ref{commuting ric}]
Let $X\in\Q(\lambda)$ then
\[
Ric(\phi X)=\left(2m-1+\frac{1}{\lambda}.\tr(S)-\frac{1}{\lambda^2}\right)\phi X
\]
and
\[
\phi (Ric(X))=\left(2m-1+\lambda.\tr(S)-\lambda^{2}\right)\phi X
\]
So, the Ricci tensor is commuting if and only if $\frac{1}{\lambda}.\tr(S)-\frac{1}{\lambda^2}=\lambda.\tr(S)-\lambda^{2}$ or, equivalently, $\lambda^{4}-\lambda^{3}.\tr(S)+\lambda.\tr(S)-1=0$. In this case, the possible values of $\lambda$ are
\[
\lambda =1,-1,\frac{\tr(S)}{2}+\sqrt{\tr(S)^{2}-4}/2,\frac{\tr(S)}{2}-\sqrt{\tr(S)^{2}-4}/2
\]
As in the proof of Proposition~\ref{parallel ric} one can conclude that $\tr(S) = 0$ and therefore the values taken by $\lambda$ are $+1$ and $-1$. Also note that $\dim \Q(1)=\dim\Q(-1)$ and $S\phi =\phi S$. The result then follows from Theorem~ \ref{commuting ric}.
\end{proof}

We recall that the Ricci tensor of $M$ is said to be Killing if $(\nabla_{\xi}Ric)X+(\nabla_{X}Ric)\xi=0$ for every tangent vector field $X$ on $M$. Here we give the proof of Theorem~\ref{killing ric} which states the Ricci tensor of a Reeb flat hypersurface is parallel if it is Killing.

\begin{proof}[Proof of Theorem~\ref{killing ric}]
From \eqref{Codazzi1} we get
\begin{align*}
-\phi X & =(\nabla_{X}S)\xi-(\nabla_{\xi}S)X\\
& = \nabla_{X}S\xi -S\nabla_{X}\xi-(\nabla_{\xi}S)X \\
& =-S\phi SX-(\nabla_{\xi}S)X
\end{align*}
for $X\in \Q$, which according to \eqref{eq9} yields
 \begin{equation}\label{eq10}
 (\nabla_{\xi}S)X=0
 \end{equation}

Next, we show that $\nabla_{\xi}X \in \Q$. Hence, since $M$ is Hopf and $S\xi=0$ one has 
\[
g(\nabla_{\xi}X,\xi)=\xi g(X,\xi)-g(X,\nabla_{\xi}\xi)=0
\]
and also 
 \begin{align*}
 g(\nabla_{\xi}X,A\xi)&=\xi g(X,A\xi)-g(X,\nabla_{\xi}A\xi)\\
 &=-g(X,\bar{\nabla}_{\xi}A\xi)\\
 &=-g(X,q(\xi)AN)\\
 &=0
 \end{align*}
where we used $(\bar{\nabla}_{\xi}A)\xi =q(\xi)AN$ and $(\bar{\nabla}_{\xi}A)\xi =\bar{\nabla}_{\xi}A\xi -A(\bar{\nabla}_{\xi}\xi)=\bar{\nabla}_{\xi}A\xi$. Similarly, for $g(\nabla_{\xi}X,AN)$ we have 
\begin{align*}
 g(\nabla_{\xi}X,AN)&=\xi g(X,AN)-g(X,\nabla_{\xi}AN)\\
 &=-g(X,\bar{\nabla}_{\xi}AN)\\
 &=g(X,q(\xi)A\xi)\\
 &=0
 \end{align*} 
in which $(\bar{\nabla}_{\xi}A)N =-q(\xi)A\xi$ and $(\bar{\nabla}_{\xi}A)N =\bar{\nabla_{\xi}}AN -A(\bar{\nabla}_{\xi}N)=\bar{\nabla}_{\xi}AN$ have been used. Therefore $\nabla_{\xi}X$ is perpendicular to $\Q^\perp = \spann\{\xi,A\xi,AN\}$ and thus $\nabla_{\xi}X\in\Q$. Moreover, we show that $\nabla_{\xi}X\in\Q(\lambda)$ if $X$ is a unit vector field in $\Q(\lambda)$. In fact, let $Y\in \Q(\mu)$ be orthogonal to $X$ so that $g(\nabla_{\xi}X,Y)\neq 0$, then
 \[
 g(\nabla_{\xi}X,Y)=\frac{1}{\mu}g(\nabla_{\xi}X,SY)=\frac{1}{\mu}g(S\nabla_{\xi}X,Y)=g(\nabla_{\xi}SX,Y)
 \]
On the other hand
 \begin{align*}
 g(\nabla_{\xi}X,Y)& =\frac{1}{\mu}(\xi g(SX,Y)-g(SX,\nabla_{\xi}Y) \\
 & =-\frac{\lambda}{\mu}g(X,\nabla_{\xi}Y)\\
 &=-\frac{\lambda}{\mu}(\xi g(X,Y)-g(Y,\nabla_{\xi}X) \\
 &=-\frac{\lambda}{\mu}g(Y,\nabla_{\xi}X)
 \end{align*}
So, it follows that $\lambda=\mu$ and $\nabla_{\xi}X\in\Q(\lambda)$.
 
We define a function $f$ on $M$ by $f=2m-1+\lambda.\tr(S)-\lambda^{2}$. Then we have $Ric(\nabla_{\xi}X)=f\nabla_{\xi}X$ and $\nabla_{\xi}Ric X = \nabla_{\xi}(f X)=\xi(f)X+f\nabla_{\xi}X$ from which one gets  $(\nabla_{\xi}Ric)X=\xi(f)X$. We also have
 \begin{align*}
 (\nabla_{X}Ric)\xi &=\nabla_{X}Ric(X)-Ric(\nabla_{X}\xi)\\
 &=(2m-4)\nabla_{X}\xi-\lambda. Ric(\phi SX)\\
 &=(2m-4)\phi SX-\lambda. Ric(\phi X)\\
 & =\lambda(-3-\lambda.\tr(S)+\lambda^{2})\phi X
 \end{align*}
If the Ricci tensor of $M$ is Killing, that is, $(\nabla_{\xi}Ric)X+(\nabla_{X}Ric)\xi=0$, then from the above equations we get
 \[
 \xi(f)X=-\lambda(-3-\lambda.\tr(S)+\lambda^{2})\phi X
 \]
which shows that the function $f$ is constant in the direction of $\xi$ and $\lambda(-3-\lambda.\tr(S)+\lambda^{2})=0$. Then, as in the proof of Proposition~\ref{parallel ric}, one can see that $\tr(S)=0$ and the shape operator $S$ has four distinct non-zero eigenvalues $\pm\sqrt{3}$, $\pm\frac{\sqrt{3}}{3}$ with corresponding eigenspaces of the same dimension. In particular, it turns out that in this case $(\nabla_{\xi}Ric)X=(\nabla_{X}Ric)\xi=0$ and the Ricci tensor is parallel.
\end{proof} 

Finally, we have the following proposition on principal curvatures and shape operator of Reeb flat hypersurfaces in $Q^m$.
\begin{proposition}\label{S-is-Reeb parallel}
Let $M$ be a Reeb flat real Hopf hypersurface in $Q^m$. Then its principal curvatures are constant along $\xi$. Moreover, the shape operator is Reeb parallel, namely, $\nabla_\xi S =0$.
\end{proposition}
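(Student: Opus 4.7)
The plan is to first extend the identity $(\nabla_\xi S)X=0$ from $X\in\Q$, already obtained as equation~\eqref{eq10} in the proof of Theorem~\ref{killing ric}, to all of $TM$, and then read off the constancy of principal curvatures from the resulting Reeb parallelism of $S$.

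For $X=\xi$, Reeb flatness gives $S\xi=0$ and $\nabla_\xi\xi=\phi S\xi=0$, so $(\nabla_\xi S)\xi=0$ is automatic. For $X=A\xi$ and $X=AN$, I would reuse the formulas
\[
\bar{\nabla}_\xi(A\xi)=q(\xi)\,AN,\qquad \bar{\nabla}_\xi(AN)=-q(\xi)\,A\xi,
\]
already derived in the proof of Theorem~\ref{killing ric} from $\bar{\nabla}A=JA\otimes q$ together with $\bar{\nabla}_\xi\xi=0$ and $\bar{\nabla}_\xi N=-S\xi=0$. Since $A\xi$ and $AN$ lie in $\cC\subset TM$ by Lemma~\ref{N-Type-characterization}(iii) (applicable because $N$ is $\A$-isotropic by Proposition~\ref{isotropic pro}), both right-hand sides are tangent to $M$, so the identities hold equally for $\nabla_\xi$. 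Combining this with $SA\xi=SAN=0$ (Lemma~5.1 of \cite{PerJeoKoSuh18}, already invoked in Proposition~\ref{lambda}) yields
\[
(\nabla_\xi S)(A\xi)=-q(\xi)\,S(AN)=0,\qquad (\nabla_\xi S)(AN)=q(\xi)\,S(A\xi)=0,
\]
which together with \eqref{eq10} establishes $\nabla_\xi S=0$ on all of $TM$.

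To deduce constancy of principal curvatures, I would take a local unit eigenvector field $X$ with $SX=\lambda X$ and $\lambda\neq 0$, so $X\in\Q(\lambda)$. The proof of Theorem~\ref{killing ric} has already shown that $\nabla_\xi X\in\Q(\lambda)$, hence $S\nabla_\xi X=\lambda\nabla_\xi X$. Applying $\nabla_\xi S=0$ then gives
\[
\lambda\nabla_\xi X=\nabla_\xi(SX)=\xi(\lambda)\,X+\lambda\nabla_\xi X,
\]
forcing $\xi(\lambda)=0$. The zero eigenvalue on $\Q^\perp$ is constant tautologically, so all principal curvatures are constant along $\xi$.

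No substantive obstacle is anticipated: every required ingredient (Reeb flatness, $\A$-isotropy of $N$, $SA\xi=SAN=0$, the expressions for the $\xi$-covariant derivatives of $A\xi$ and $AN$, and the $\xi$-invariance of each $\Q(\lambda)$) is already assembled in earlier sections. The only technical point worth flagging is that $\bar{\nabla}_\xi(A\xi)$ and $\bar{\nabla}_\xi(AN)$ must have no normal components on $M$; this is immediate from $S\xi=0$ and the tangency of $A\xi$ and $AN$.
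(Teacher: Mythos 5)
Your proof is correct and follows essentially the same route as the paper's: both rest on \eqref{eq10} together with the $\xi$-invariance of each $\Q(\lambda)$ established in the proof of Theorem~\ref{killing ric}. The only (harmless) differences are that on $\Q^\perp$ you compute $\nabla_\xi(A\xi)$ and $\nabla_\xi(AN)$ explicitly via $\bar{\nabla}A = JA\otimes q$, whereas the paper shows $\nabla_\xi$ preserves $\Q^\perp$ by pairing against eigenvectors in $\Q(\lambda)$, and you derive the constancy of the principal curvatures after, rather than before, the Reeb parallelism.
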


\begin{proof}
Let $\lambda$ be a principal curvature of $M$. Using \eqref{eq10} we have $\nabla_\xi SX = S \nabla_\xi X$ for all $X \in \Q$ . So, if $X \in \Q(\lambda)$, then as in the proof of Theorem~\ref{killing ric} we get $\nabla_\xi\lambda X = \lambda \nabla_\xi X$, that is, $\xi(\lambda) + \lambda \nabla_\xi X = \lambda \nabla_\xi X$ which gives $\xi(\lambda) = 0$ and thus $\lambda$ is constant along $\xi$. 

To prove that $S$ is Reeb parallel, let $Y \in \Q^\perp$ then $SY = 0$. Moreover, for any $X \in \Q(\lambda)$ we have $\lambda g(\nabla_\xi Y , X) = -\lambda g(Y , \nabla_\xi X)$. Since $\nabla_\xi X \in \Q(\lambda)$ it follows that  
\[
\lambda g(Y , \nabla_\xi X) = g(Y , S\nabla_\xi X) = g(SY , \nabla_\xi X) = 0
\]
So, $\nabla_\xi Y \in \Q^\perp$ and $S(\nabla_\xi Y) = 0$. Consequently, $(\nabla_\xi S)Y = 0$ for all $Y \in \Q^\perp$ which together with \eqref{eq10} implies that the shape operator is Reeb parallel. 
\end{proof}

\section*{Acknowledgment}
This work was conducted during a postdoctoral fellow visit of the third author at Tarbiat Modares University with additional support from Iran national science foundation via grant no. 95012382.

The authors would like to thank Prof. Young Jin Suh for his comments on the first version of the paper.

\providecommand{\href}[2]{#2}
\bibliographystyle{amsplain}

\end{document}